\newcommand{\w}{\widetilde}
\newcommand{\ov}{\overline}
\numberwithin{equation}{section}
\newtheorem{thm}{\bf Theorem}[section]
\newtheorem{prop}{\bf Proposition}[section]
\newtheorem{cor}[thm]{\bf Corollary}
\newtheorem{defn}{\bf Definition}[section]
\theoremstyle{remark}
\newtheorem{rem}{\bf Remark}[section]
\newtheorem{exmp}{\bf Example}[section]
\begin{document}
\def \b{\Box}

\begin{center}
{\Large {\bf VECTOR GROUPOIDS}}\\[0.5cm]
{VASILE POPU\c TA and GEORGHE IVAN}\\[0.8cm]
\end{center}

{\small ABSTRACT. The main purpose of this paper is to study the
vector groupoids. This is an algebraic structure which combines
the concepts of Brandt groupoid and vector space such that these
are compatible. The new concept of vector groupoid has
applications in geometry and other areas.}
{\footnote{{\it AMS classification:} 20L13, 20L99.\\
{\it Key words and phrases:} Brandt groupoid, vector groupoid.}}

\section{INTRODUCTION}
\indent\indent A groupoid, also known as a {\it virtual group}
\cite{rams71}, is an algebraic structure introduced by H. Brandt
\cite{brandt}.
 A groupoid (in the sense of Brandt) can be thought as a set with a partially defined
 multiplication, for which the usual properties of a group hold whenever they make sense.

A generalization of Brandt groupoid  has appeared in
\cite{ehre50}. C. Ehresmann added further structures ( topological
and differentiable as well as algebraic) to groupoids.

Groupoids and its generalizations (topological groupoids, Lie
groupoids, measure groupoids, sympectic groupoids etc.) are
mathematical structures that have proved to be useful in many
areas of science [algebraic topology (\cite{brow88},
\cite{dumoiv}), harmonic analysis and operators algebras
(\cite{dumoiv}, \cite{renaul}, \cite{west}), differential geometry
and its applications (\cite{canwei}, \cite{cdw87},
\cite{mackey63}, \cite{ramsren01}, \cite{weigro}), noncommutative
geometry (\cite{cone94}), algebraic and geometric combinatorics
(\cite{john}, \cite{ziv06}), dynamics of networks
(\cite{diaste04}, \cite{golust06}, \cite{stegopi03} and more].

It is remarkable to note that according to A. Connes
\cite{cone94}, Heisenberg was discovered quantum mechanics by
considering the groupoid of quantum transitions rather than the
group of symmetry.

The paper is organized as follows. In Section 2 we define
groupoids and useful properties of them are presented.
 In Section 3 we introduce the concept of vector groupoid and its
 properties are established. In Section 3  we give some algebraic
 constructions of vector groupoids.

\section{BRANDT GROUPOIDS}
\indent\indent We recall the minimal necessary backgrounds on
groupoids for our developments (for further details see e.g.
\cite{brow87}, \cite{higg71}, \cite{ivan02}, \cite{vpop} and
references therein for more details).

\begin{defn}(\cite{cdw87}) A \textbf{groupoid $ G $
over} $ G_{0} $ ( \textbf{in the sense of Brandt} ) is a pair $
(G, G _0) $ of nonempty sets such that $ G _0\subseteq G  $
endowed with two surjective maps $\alpha ,\beta :G \rightarrow G
_0$ ( called {\bf source}, respectively {\bf target}, a partially
binary operation (called {\bf multiplication}) $~m :G
_{(2)}\rightarrow G,~(x,y)\longmapsto m \left( x,y\right) :=x\cdot
y,~$ where  $~G _{(2)}:= G\times_{(\beta, \alpha)}G = \left\{
\left( x,y\right) \in G \times G \mid \beta \left( x\right)
=\alpha \left( y\right) \right\}$ is the {\bf set of composable
pairs} and a map $~\iota :G \rightarrow G,~x\longmapsto \iota
(x):=x^{-1}$ ( called \textbf{inversion}), which verify the
following conditions:

(G)~({\bf associativity}): $~(x\cdot y)\cdot z=x\cdot (y\cdot z)$
in the sense that if one of two products $(x\cdot y)\cdot z$ and
$x\cdot (y\cdot z)$ is defined, then the other product is also
defined and they are equals;

(G2)~({\bf units}): for each $x\in G~ \Rightarrow ~(\alpha(x),x),
~(x,\beta( x))\in G _{(2)}~$ and  we have $~\alpha(x)\cdot
x=x\cdot \beta(x)=x $;

 (G3)~({\bf inverses}): for each $ x\in G~\Rightarrow ~ (x,x^{-1}), ~(x^{-1},x)\in G
_{(2)} $ and we have $~x^{-1}\cdot x=\beta(x), ~~x\cdot
x^{-1}=\alpha(x).$
\end{defn}

A groupoid $G$ over $G _0$ with the \textit{structure functions} $
\alpha ,\beta ,m ,\iota $  is denoted by $(G, \alpha, \beta, m,
\iota, G _0)$ or $(G, \alpha, \beta, G _0)$ or $(G, G _0)$.
 The element $\alpha(x)$ respectively $\beta(x)$ is called the
\textit{left unit} respectively \textit{ right unit} of $x;$
$~G_0$ is called the \textit{ unit set} of $G$. The map $(\alpha,
\beta)$ defined by:
\[(\alpha,\beta):G\rightarrow G _0\times G _0,\quad (\alpha,
\beta)(x):=(\alpha(x), \beta(x)),~x\in G,\] is called the
\textit{anchor map} of $G$. For each $ u\in G_{0} $, the set $
G_{u}:= \alpha^{-1}(u)$ ( resp. $ G_{u}:= \beta^{-1}(u)$ ) is
called \textit{ $\alpha-$ fibre} ( resp. \textit{ $\beta-$ fibre}
) of $ G$ at $ u\in G_{0}$. If $~u,v\in G_{0}~$ we will write
$~G_{v}^{u} = \alpha^{-1}(u) \cap \beta^{-1}(v).~$

A  groupoid $~( G, G_{0})$ is said to be {\it transitive}, if its
 anchor map is surjective.

{\bf Convention.}~(1)~We write sometimes $~x y~$ for $~m(x,y)~$,
if $ (x,y) \in G_{(2)}.$

(2)~Whenever we write a product in a given groupoid, we are
assuming that it is defined.\hfill$\b$

In the following proposition we summarize some basic rules of
algebraic calculation in a Brandt groupoid obtained directly from
definitions.

\begin{prop}
(\cite{ivan02}) {\it In a groupoid $~( G, \alpha, \beta, m, \iota
, G_{0})~$ the following assertions hold :

$(i)~~~\alpha(u)=\beta(u)=u,~~~ u\cdot u=u \quad \hbox{and}\quad
\iota (u)=u,~ \forall u\in G_0;$

$(ii)~\alpha \left( x\cdot y\right) =\alpha \left( x\right) \quad
\hbox{and}\quad \beta \left( x\cdot y\right) =\beta \left(
y\right) ,~ \forall  \left( x,y\right) \in G _{\left( 2\right) };$

$(iii)~\alpha \left( x^{-1}\right) =\beta \left( x\right) \quad
\hbox{and}\quad \beta \left( x^{-1}\right) =\alpha \left( x\right)
,~ \forall  x\in G;$

$(iv)~$ {\bf(cancellation law)} If  for $ x,y_1,y_2,z\in G $ we
have $(x,y_1),(x,y_2),$ $(y_1,z),(y_2,z)\in G_{(2)},$ then:
\[(a) \quad  x\cdot y_1=x\cdot y_2~~\Rightarrow~~ y_1=y_2;~~~
(b) \quad y_1\cdot z=y_2\cdot z~~\Rightarrow~~ y_1=y_2.\]

{\it(v)} For each $ x\in G$  we have $~(x^{-1})^{-1}=x.$

{\it(vi)} If $(x,y)\in G_{(2)},$ then $(y^{-1},x^{-1})\in G_{(2)}$
and the equality holds: \[ (x\cdot y)^{-1}=y^{-1}\cdot x^{-1}.\]

{\it(vii)} For all $(x,y)\in G_{(2)}$, the following equalities
hold: \[ x^{-1}\cdot(x\cdot y)=y ~~~\hbox{and}~~~(x\cdot y)\cdot
y^{-1}=x.\]}
\end{prop}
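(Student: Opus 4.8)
The plan is to prove each of the seven assertions directly from the groupoid axioms (G), (G2), (G3), exploiting the cancellation law in (iv) as the central tool once it is available. I would establish the statements roughly in the order they are listed, since several later parts reuse earlier ones.

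\medskip

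For $(i)$, let $u\in G_0$. Since $\alpha,\beta$ are surjective onto $G_0$, there is $x$ with $\alpha(x)=u$; but actually the cleaner route is to note that for any $u\in G_0\subseteq G$ the element $\alpha(u)$ is the left unit of $u$. I would argue that units are idempotent: from (G2) applied to a unit-valued element together with uniqueness of units (which itself follows from cancellation), one gets $\alpha(u)=u$ and $\beta(u)=u$, hence $u\cdot u=\alpha(u)\cdot u=u$ and $\iota(u)=u^{-1}=u$ using (G3). For $(ii)$, given $(x,y)\in G_{(2)}$, I would apply axiom (G2) to the product $x\cdot y$: since $\alpha(x)\cdot x=x$, associativity gives $\alpha(x)\cdot(x\cdot y)=(\alpha(x)\cdot x)\cdot y=x\cdot y$, and by uniqueness of the left unit this forces $\alpha(x\cdot y)=\alpha(x)$; the claim $\beta(x\cdot y)=\beta(y)$ is symmetric. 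For $(iii)$, I would read off $\alpha(x^{-1})$ and $\beta(x^{-1})$ from (G3): the relations $x^{-1}\cdot x=\beta(x)$ and $x\cdot x^{-1}=\alpha(x)$ exhibit $\beta(x)$ as the left unit of $x^{-1}$ and $\alpha(x)$ as its right unit.

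\medskip

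The cancellation law $(iv)$ is where I expect the real content to sit, and it is the main obstacle since everything else leans on it. Assuming $x\cdot y_1=x\cdot y_2$ with all composability hypotheses in place, I would left-multiply by $x^{-1}$, which is composable with $x\cdot y_i$ because $\beta(x^{-1})=\alpha(x)=\alpha(x\cdot y_i)$ by $(iii)$ and $(ii)$. Then associativity gives $x^{-1}\cdot(x\cdot y_i)=(x^{-1}\cdot x)\cdot y_i=\beta(x)\cdot y_i$, and since $\beta(x)=\alpha(y_i)$ the unit axiom (G2) collapses this to $y_i$; equating the two sides yields $y_1=y_2$. Part $(b)$ is the mirror image, multiplying on the right by $z^{-1}$. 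The delicate point throughout is checking that each product I write is actually composable, so I would verify the source/target matching at every step using $(ii)$ and $(iii)$.

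\medskip

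The remaining parts follow quickly. For $(v)$, the defining relations of $x^{-1}$ in (G3) say precisely that $x$ plays the role of $(x^{-1})^{-1}$, and uniqueness of inverses (a consequence of cancellation) gives $(x^{-1})^{-1}=x$. For $(vi)$, given $(x,y)\in G_{(2)}$ I would first check $(y^{-1},x^{-1})\in G_{(2)}$ via $\beta(y^{-1})=\alpha(y)=\beta(x)=\alpha(x^{-1})$ using $(ii)$ and $(iii)$, then verify that $y^{-1}\cdot x^{-1}$ satisfies the two defining equations of $(x\cdot y)^{-1}$ by an associativity computation such as $(x\cdot y)\cdot(y^{-1}\cdot x^{-1})=x\cdot(y\cdot y^{-1})\cdot x^{-1}=x\cdot\alpha(y)\cdot x^{-1}=x\cdot x^{-1}=\alpha(x)$, invoking uniqueness of inverses to conclude. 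Finally $(vii)$ is just the intermediate identity already obtained in proving $(iv)$: $x^{-1}\cdot(x\cdot y)=\beta(x)\cdot y=\alpha(y)\cdot y=y$, and symmetrically for $(x\cdot y)\cdot y^{-1}=x$.
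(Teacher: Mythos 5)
The paper itself offers no proof of this proposition: it is stated with a citation to \cite{ivan02} and the text only remarks that the rules are ``obtained directly from definitions,'' so there is nothing internal to compare your argument against. Judged on its own terms, your computations for (iv), (v), (vi) and (vii) are the standard ones and are correct, including the composability checks via (ii) and (iii).

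There is, however, a circular dependency in the way you organize (i) and (ii). You justify $\alpha(u)=\beta(u)=u$ in (i) by ``uniqueness of units (which itself follows from cancellation),'' and you invoke ``uniqueness of the left unit'' again in (ii); but your proof of the cancellation law (iv) rests on (ii) and (iii). As written, (i) and (ii) depend on (iv) while (iv) depends on (ii), so the chain does not close. The repair is easy and you in fact start down the right road before abandoning it: since $G_{(2)}=\{(x,y)\mid \beta(x)=\alpha(y)\}$, the mere composability assertions in (G2) give $\beta(\alpha(x))=\alpha(x)$ and $\alpha(\beta(x))=\beta(x)$ for every $x$, and surjectivity of $\alpha$ and $\beta$ then yields $\alpha(u)=\beta(u)=u$ for all $u\in G_0$ with no appeal to cancellation; $u\cdot u=\alpha(u)\cdot u=u$ and $u^{-1}=\alpha(u^{-1})\cdot u^{-1}=u\cdot u^{-1}=\alpha(u)=u$ follow. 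Likewise in (ii) no uniqueness statement is needed: once associativity tells you that $\alpha(x)\cdot(x\cdot y)$ is \emph{defined}, the definition of $G_{(2)}$ already forces $\alpha(x\cdot y)=\beta(\alpha(x))=\alpha(x)$. With (i)--(iii) established this way, your derivations of (iv)--(vii) go through unchanged.
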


In a groupoid $(G, G_{0})$ for any $ u\in G_{0}$, the set
$~G(u):=\alpha^{-1}(u)\cap\beta^{-1}(u)=\{ x\in G~|~
\alpha(x)=\beta(x)=u~\}$ is a group under the restriction of the
partial multiplication $~ m~$ to $ G(u),$ called the {\it isotropy
group at} $u$ of $G$.

\begin{prop}
(\cite{ivan02}) {\it Let $~( G, \alpha, \beta, m, \iota , G_{0})~$
be a groupoid. Then:

$(i)~~~~~\alpha \circ \iota = \beta,~~ \beta \circ \iota =
\alpha~~ \hbox{and}~~ \iota \circ \iota = Id_{G}.$

$(ii)~~~\varphi : G(\alpha(x)) \to G(\beta(x)),~ \varphi (z):=
x^{-1} z x $ is an isomorphism of groups.

$(iii)~ $ If $ (G, G_{0} ) $ is transitive, then all isotropy
groups are isomorphes.}
\end{prop}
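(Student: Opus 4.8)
The plan is to handle the three parts in order, obtaining (i) directly from the elementary calculus already established in Proposition 2.1, then constructing the isomorphism in (ii) by hand, and finally reading off (iii) from transitivity.

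For (i), each identity is already contained in Proposition 2.1. The equalities $\alpha\circ\iota=\beta$ and $\beta\circ\iota=\alpha$ are, evaluated at an arbitrary $x\in G$, exactly the statement $\alpha(x^{-1})=\beta(x)$, $\beta(x^{-1})=\alpha(x)$ of item (iii) there; and $\iota\circ\iota=Id_{G}$ is precisely $(x^{-1})^{-1}=x$ from item (v). So nothing is required here beyond unwinding the functional notation.

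For (ii), I would first check that $\varphi$ is well defined, i.e. that for $z\in G(\alpha(x))$ the product $x^{-1}zx$ is defined and lies in $G(\beta(x))$. By part (i) we have $\beta(x^{-1})=\alpha(x)=\alpha(z)$, so $(x^{-1},z)$ is composable; by Proposition 2.1(ii), $\alpha(x^{-1}z)=\alpha(x^{-1})=\beta(x)$ and $\beta(x^{-1}z)=\beta(z)=\alpha(x)$, hence $(x^{-1}z,x)$ is composable as well, and $\alpha(x^{-1}zx)=\beta(x^{-1}zx)=\beta(x)$, so $\varphi(z)\in G(\beta(x))$. Next I would verify the homomorphism property: for $z_{1},z_{2}\in G(\alpha(x))$,
\[\varphi(z_{1})\varphi(z_{2})=x^{-1}z_{1}(xx^{-1})z_{2}x=x^{-1}z_{1}\,\alpha(x)\,z_{2}x=x^{-1}z_{1}z_{2}x=\varphi(z_{1}z_{2}),\]
where $xx^{-1}=\alpha(x)$ is (G3) and $z_{1}\,\alpha(x)=z_{1}\,\beta(z_{1})=z_{1}$ is the unit law (G2), since $\beta(z_{1})=\alpha(x)$. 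One also notes that $\varphi$ sends the unit $\alpha(x)$ of $G(\alpha(x))$ to $x^{-1}\alpha(x)x=x^{-1}x=\beta(x)$, the unit of $G(\beta(x))$.

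To complete (ii) I would exhibit the inverse explicitly rather than argue injectivity and surjectivity separately: the symmetric map $\psi:G(\beta(x))\to G(\alpha(x))$, $\psi(w):=xwx^{-1}$, is well defined by the same fibre computation, and using $xx^{-1}=\alpha(x)$, $x^{-1}x=\beta(x)$ with the unit law gives $\psi(\varphi(z))=x(x^{-1}zx)x^{-1}=\alpha(x)\,z\,\alpha(x)=z$ and, symmetrically, $\varphi(\psi(w))=w$. Thus $\varphi$ is a bijective homomorphism, hence a group isomorphism. Part (iii) is then immediate: if $(G,G_{0})$ is transitive its anchor map is surjective, so for any $u,v\in G_{0}$ there is an $x\in G$ with $\alpha(x)=u$ and $\beta(x)=v$, whence (ii) yields $G(u)=G(\alpha(x))\cong G(\beta(x))=G(v)$. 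The only genuine obstacle is the bookkeeping in (ii): one must confirm that every product written down is composable before invoking associativity, and the collapse $x^{-1}z_{1}(xx^{-1})z_{2}x=x^{-1}z_{1}z_{2}x$ is exactly the point where the compatibility of source and target with multiplication together with axioms (G2) and (G3) all enter.
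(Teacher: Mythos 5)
Your proof is correct. Note that the paper itself gives no proof of this proposition (it is quoted from \cite{ivan02}), so there is nothing to compare against; your argument --- reading (i) off Proposition 2.1(iii) and (v), checking composability fibre by fibre for the conjugation $\varphi(z)=x^{-1}zx$, collapsing $x x^{-1}=\alpha(x)$ via (G2)--(G3) to get the homomorphism property, exhibiting $\psi(w)=xwx^{-1}$ as a two-sided inverse, and deducing (iii) from surjectivity of the anchor --- is the standard one and is carried out with the composability bookkeeping done properly.
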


A {\it group bundle} is a  groupoid $~( G, G_{0})~$ with the
property that  $~\alpha (x) = \beta (x)~$ for all $~x\in G .$
Moreover,a group bundle is the union of its isotropy groups $~G(u)
= \alpha^{-1}(u), u \in G_{0}~$ (here, two elements may be
composed iff they lie in the same fiber $\alpha^{-1}(u)$~).

If $~( G,\alpha,\beta, G_{0})~$ is a groupoid then $~Is(G): = \{
x\in G~|~\alpha (x)=\beta (x) \}~$ is a group bundle, called the
{\it isotropy group bundle}  of $~G.~$

\begin{exmp}
{\it (i)}~ Any group $ G $ having $ e $ as unity, is a groupoid
over $ G_{0} = \{ e \}$ with the structure functions $ \alpha,
\beta, m, \iota $ given by:\\ $\alpha(x) = \beta(x) = e,~ \iota(x)
= x^{-1}$ for all $ x\in G$ and $ m(x,y)= xy $ for all $x,y\in G$.

{\it (ii)}~ Any set $~X~$ can be endowed with a {\it nul groupoid}
structure over itself. For this we take: $~\alpha = \beta =\iota =
Id_{X}; ~x , y\in X~$ are composable iff $~x=y~$ and we define $~
x\cdot x = x.$

{\it (iii)}~ The Cartesian product $~G:= X \times X~$ has a
structure of groupoid over  $ \Delta_{X} = \{ (x,x)\in X\times X
~|~ x\in X \}$ by taking the structure functions as follows:
$~\w{{\alpha}}(x,y):= (x,x),~ \w{{\beta}}(x,y):= (y,y);~$ the
elements $~ (x,y)~$ and $~(y^{\prime},z)~$ are composable in
$~G:=X \times X~$ iff $~y^{\prime} = y~$ and we define
$~(x,y)\cdot (y,z) = (x,z)~$ and the inverse of $~(x,y)~$ is
defined by $~(x,y)^{-1}:= (y,x).~$ This is usually called the {\it
pair} or {\it coarse groupoid}. Its unit set is $
G_{0}:=\Delta_{X}.~$ The isotropy group $~G(u)~$ at $~u=(x,x)~$ is
the nul group $~\{(u,u)\}.$
\end{exmp}

\begin{exmp}
{\it (i)}~{\it The symmetry groupoid $ {\cal
SG}(X) $}.
Let $ X $ be a nonempty set and consider\\[0.2cm]
 $G:= {\cal SG}(A,X) =\{ f : A \to A~|~ \emptyset\neq A\subseteq X,~ f~ \hbox{is bijective}~ \}
 ~$ and\\[0.2cm]
 $~ G_{0}:= \{ Id_{A}~|~\emptyset \neq A\subseteq X \} $,
 where $ Id_{A} $ is the identity map on $ A.$\\

 Let $ G_{(2)}:= \{(f,g)\in G\times G | D(f)=D(g) \} $, where $ D(f)$ denotes  the domain of  $ f $.
The structure functions $~ \alpha, \beta : G \to G_{0},~ \iota : G
\to G~ $ and the multiplication $~ m : G_{(2)} \to G ~$
  are given by:\\[0.2cm]
$ \alpha(f):= Id_{D(f)},~~ \beta(f):= Id_{D(f)},~~ \iota(f):=
f^{-1}~~ $ and $~~ m(f,g):= f\circ g $.

Then $ (G, G_{0}) $ is a groupoid, called the {\it groupoid of
bijective functions from the subsets $ A $ of $ X $ onto $ A $} or
the {\it symmetry groupoid of the set $X$}.

The isotropy group at $ u= Id_{A}$ is the symmetry group of the set
$A$, i.e. $ G(u) = \{ f : A \to A~|~ f~ \hbox{is bijective}~ \}$.

In particular, the  symmetry groupoid of a finite set $ X = \{
x_{1}, x_{2},\ldots, x_{n} \}, $ is called the {\it symmetry
groupoid of degree $ n $} and is denoted by $ {\cal SG}_{n}$. Its
unit set is $~ {\cal SG}_{n,0} = \{ Id_{A} ~|~ \emptyset \neq A
\subseteq \{ x_{1}, x_{2},\ldots, x_{n}\} \}$. The cardinals of
these finite sets are given by:

$~~~~~~~~~~~~~~~|~{\cal SG}_{n}~|~=~\sum\limits_{k=1}^{n} k!  {n
\choose k},~~~~~ |~{\cal SG}_{n,0}~| ~=~2^{n} - 1.$\\

{\it (ii)~} {\it The Galois groupoid $ {\cal G}al( {\cal E}/ K ) $}.
Let $ F / K $ be an extension field of a field $K$, i.e. $K$ is a
subfield of $F$. We consider an indexed family $ {\cal E}:=
(E_{i})_{i\in I} $ of intermediate fields $ E_{i} $, that is $ K
\subseteq E_{i}
\subseteq F $ for each $ i\in I.$ Let \\[0.2cm]
 $\Gamma:= {\cal G}al({\cal E}/K) = \{ \varphi : E_{i} \to E_{i}~|~ \varphi~ \hbox{is a K -automorphism } \}
 ~$ and\\[0.2cm]
 $~ \Gamma_{0}:= {\cal G}al({\cal E}/K)_{0} = \{ Id_{E_{I}}~|~ i\in I\} $.

 Let $ \Gamma_{(2)}:= \{(\varphi ,\psi)\in \Gamma \times \Gamma | D(\varphi)=D(\psi) \} $.
The structure functions $~ \ov{\alpha}, \ov{\beta} : \Gamma \to
\Gamma_{0},~ \ov{\iota} : \Gamma \to \Gamma~ $ and  $~ \ov{m} :
\Gamma_{(2)} \to \Gamma ~$
  are given by:\\[0.2cm]
$ \ov{\alpha}(\varphi):= Id_{D(\varphi)},~~ \ov{\beta}(\varphi):=
Id_{D(\varphi)},~~ \ov{\iota}(\varphi):=
\varphi^{-1}~~ $ and $~~ \ov{m}(\varphi,\psi):= \varphi \circ \psi $.\\

Then $ {\cal G}al({\cal E}/K) $ is a groupoid over  $ {\cal
G}al({\cal E}/K)_{0}$,  called the {\it Galois groupoid associated
to $ {\cal E}$}. The isotropy group at $ u= Id_{E_{i}}$ is the
Galois group  $ Gal(E_{i}/K)$.
\end{exmp}

\begin{defn}
(\cite{cdw87}) By \textbf{morpfism of groupoids} or {\bf groupoid
morphism} between  the groupoids $(G ,\alpha ,\beta , m ,\iota
,G_0)$ and $(G ^{\prime },\alpha ^{\prime },\beta ^{\prime }, m
^{\prime },\iota ^{\prime} , G_{0}^{\prime})$, we mean a map $~f:G
\rightarrow G ^{\prime } $ which verifies the following
conditions:

{\it(i)} $~~~~~\forall~ (x,y)\in G_{(2)}~~\Longrightarrow~~ (
f(x), f(y) )\in G_{(2)}^{\prime};~$

{\it(ii)}$~~~~~f( m(x,y)) = m^{\prime}(f(x),f(y)), ~ \forall ~
(x,y)\in G_{(2)}.$
\end{defn}

\begin{prop} {\it If $ f : G \longrightarrow G ^{\prime}$ is a morpfism of groupoids, then:
\[\hbox{{\it(a)}}~~~ f\left( u\right)\in G_{0}^{\prime },~~~ \forall~ u\in
G_{0};~~~~~ \hbox{{\it(b)}}~~ f\left( x^{-1}\right) =\left(
f\left( x\right) \right) ^{-1},~  \forall~ x\in G.\]}
\end{prop}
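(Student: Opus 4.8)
The plan is to derive both statements purely from the morphism conditions (i)--(ii) of Definition 2.2 together with the computational rules collected in Proposition 2.1, the essential input being the cancellation laws. For part (a) the guiding idea is that in a groupoid a unit is characterized among all elements as an \emph{idempotent}. First I would observe that if $u\in G_0$, then by Proposition 2.1(i) we have $u\cdot u = u$, and in particular $(u,u)\in G_{(2)}$. Applying the morphism conditions gives $(f(u),f(u))\in G'_{(2)}$ together with
\[
f(u)\cdot f(u) = f(u\cdot u) = f(u),
\]
so that $y:=f(u)$ is an idempotent of $G'$.

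It then remains to show that every idempotent of a groupoid lies in its unit set, and this is the only step that is not purely formal. Writing $p:=\alpha'(y)$, the unit axiom (G2) in $G'$ gives $p\cdot y = y$, while idempotency gives $y\cdot y = y$; hence $y\cdot y = p\cdot y$. Since both $(y,y)$ and $(p,y)$ belong to $G'_{(2)}$, the right cancellation law (Proposition 2.1(iv)(b)) yields $y = p = \alpha'(y)\in G'_0$, so $f(u)\in G'_0$, which is (a). I expect this cancellation step to be the main obstacle: without it one only knows that $f(u)$ is idempotent, which a priori is weaker than being a unit, and it is precisely the Brandt-groupoid cancellation that closes the gap.

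For part (b) I would feed the inverse axiom (G3) through the morphism. From $x\cdot x^{-1} = \alpha(x)$ and $(x,x^{-1})\in G_{(2)}$ one gets, using (i)--(ii),
\[
f(x)\cdot f(x^{-1}) = f(x\cdot x^{-1}) = f(\alpha(x)).
\]
By part (a) the right-hand side is a unit of $G'$; applying $\alpha'$ and invoking Proposition 2.1(i)--(ii) identifies it exactly as $f(\alpha(x)) = \alpha'(f(x))$. On the other hand (G3) in $G'$ reads $f(x)\cdot (f(x))^{-1} = \alpha'(f(x))$, so comparing the two expressions gives
\[
f(x)\cdot f(x^{-1}) = f(x)\cdot (f(x))^{-1}.
\]
Because $(f(x),f(x^{-1}))$ and $(f(x),(f(x))^{-1})$ both lie in $G'_{(2)}$, the left cancellation law (Proposition 2.1(iv)(a)) forces $f(x^{-1}) = (f(x))^{-1}$, establishing (b). Thus both parts reduce to the cancellation laws of Proposition 2.1, and beyond the idempotent-to-unit step in (a) I anticipate no further difficulty.
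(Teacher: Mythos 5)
Your proof is correct. Note that the paper states this proposition without any proof at all, so there is nothing to compare against; your argument is the standard one and it closes the gap properly. In particular, you correctly identify the one non-formal point in (a): a morphism only immediately gives that $f(u)$ is idempotent, and you need the Brandt cancellation property (equivalently, multiplying by $(f(u))^{-1}$ and using Proposition 2.1(vii)) to conclude that an idempotent is a unit. The only pedantic caveat is that Proposition 2.1(iv) as stated bundles hypotheses involving an auxiliary element $x$ with $(x,y_1),(x,y_2)\in G_{(2)}$ even for right cancellation; in both of your applications such an element exists (e.g.\ the common unit $\alpha'(y_1)=\alpha'(y_2)$, whose existence you have already established), so the hypotheses are satisfiable and the argument stands.
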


From Proposition 2.3(a) follows that a  groupoid morphism $ f : G
\to G^{\prime} $ induces a map $ f_{0} : G_{0} \to G_{0}^{\prime}
$ taking $ f_{0}(u): = f(u), ~(\forall) u\in G_{0}$, i.e. the map
$ f_{0}$ is the restriction of $ f$ to $ G_{0}.$ We say that
$(f,f_{0}) : (G,G_{0}) \rightarrow ( G^{\prime}, G_{0}^{\prime} )
$ is a morphism of groupoids.

If $~G_{0} = G_{0}^{\prime}~$ and $~f_{0} = Id_{G_{0}},$ we say
that $~f : G \to G^{\prime}~$ is a $ G_{0}~$- {\it morphism} of
groupoids over $ G_{0}$.

A groupoid morphism $~(f,f_{0})~$ is said to be {\it isomorphism
of groupoids} or {\it groupoid isomorphism}, if $ f $ and $ f_{0}$
are bijective maps.

\begin{prop}
(\cite{ivan02}) {\it Let $(G ,\alpha ,\beta, m ,\iota ,G_0)$ and
$(G^{\prime},\alpha^{\prime }, \beta^{\prime}, m^{\prime
},\iota^{\prime },G_{0}^{\prime})$  be two groupoids. The pair
$~(f,f_{0}) : (G, G_{0})\longrightarrow (G^{\prime},
G_{0}^{\prime})~$ where $ f:G \longrightarrow G^{\prime}$ and $ ~
f_{0} : G_{0}\longrightarrow G_{0}^{\prime},$ is a groupoid
morphism if and only if the following conditions are verified:

{\it(i)} $~~~~~\alpha^{\prime} \circ f = f_{0}\circ \alpha \quad
\hbox{and}\quad  \beta^{\prime} \circ f = f_{0}\circ \beta ;$

{\it(ii)}$~~~~~f\left( m \left( x,y\right) \right) = m^{\prime
}\left( f\left( x\right) ,f\left( y\right) \right),\quad  \forall
~ (x,y)\in G_{\left(2\right)}.$}
\end{prop}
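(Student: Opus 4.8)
The plan is to prove the two implications of the equivalence separately, using Definition 2.2 as the working definition of a groupoid morphism and leaning on the unit identities of Proposition 2.1(i) together with the unit-preservation property of Proposition 2.3(a). For the direct implication, suppose $(f, f_{0})$ is a groupoid morphism, so that $f$ satisfies Definition 2.2 and $f_{0} = f|_{G_{0}}$. Condition (ii) of the statement coincides verbatim with Definition 2.2(ii), so nothing is required there. To obtain the first identity in (i), I would fix $x \in G$ and apply Definition 2.2(i) to the composable pair $(\alpha(x), x) \in G_{(2)}$, which exists by axiom (G2); this yields $(f(\alpha(x)), f(x)) \in G_{(2)}^{\prime}$, i.e. $\beta^{\prime}(f(\alpha(x))) = \alpha^{\prime}(f(x))$. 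Since $\alpha(x) \in G_{0}$, Proposition 2.3(a) gives $f(\alpha(x)) \in G_{0}^{\prime}$, whence $\beta^{\prime}(f(\alpha(x))) = f(\alpha(x))$ by Proposition 2.1(i). Combining these equalities produces $\alpha^{\prime}(f(x)) = f(\alpha(x)) = f_{0}(\alpha(x))$, that is $\alpha^{\prime} \circ f = f_{0} \circ \alpha$; the companion identity $\beta^{\prime} \circ f = f_{0} \circ \beta$ follows in the same way from the pair $(x, \beta(x)) \in G_{(2)}$.

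For the converse, assume (i) and (ii) hold. Again (ii) is exactly Definition 2.2(ii), so it remains to establish Definition 2.2(i). Given $(x, y) \in G_{(2)}$, one has $\beta(x) = \alpha(y)$ by the definition of $G_{(2)}$; applying $f_{0}$ and then the two relations in (i) gives $\beta^{\prime}(f(x)) = f_{0}(\beta(x)) = f_{0}(\alpha(y)) = \alpha^{\prime}(f(y))$, so $(f(x), f(y)) \in G_{(2)}^{\prime}$, which is Definition 2.2(i). To confirm that $(f, f_{0})$ genuinely qualifies as the morphism of the statement I would finally check $f_{0} = f|_{G_{0}}$: for $u \in G_{0}$, the relations in (i) together with $\alpha(u) = \beta(u) = u$ show $\alpha^{\prime}(f(u)) = \beta^{\prime}(f(u)) = f_{0}(u)$, so $(f(u), f(u)) \in G_{(2)}^{\prime}$; then applying (ii) to $(u, u)$ and using $u \cdot u = u$ gives $f(u) \cdot f(u) = f(u) = \alpha^{\prime}(f(u)) \cdot f(u)$, and the cancellation law of Proposition 2.1(iv) forces $f(u) = \alpha^{\prime}(f(u)) = f_{0}(u) \in G_{0}^{\prime}$.

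Neither direction hides a deep difficulty; both reduce to unwinding the composability condition $\beta(x) = \alpha(y)$ and its primed counterpart. The step requiring the most care is the handling of the units: everything turns on the interplay between Proposition 2.1(i), which says $\alpha$ and $\beta$ fix every element of $G_{0}$, and Proposition 2.3(a), which says a morphism maps $G_{0}$ into $G_{0}^{\prime}$. In the converse one must additionally resist silently assuming $f_{0} = f|_{G_{0}}$ and instead deduce it, as above, from (i), (ii) and the cancellation law.
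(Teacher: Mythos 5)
Your argument is correct. Note that the paper itself gives no proof of this proposition (it is quoted from the reference [12], Gh. Ivan's paper), so there is nothing to compare against; what you have written is the standard argument and it works. Both directions are sound: the forward direction correctly extracts $\alpha'\circ f=f_0\circ\alpha$ from the composability of $(\alpha(x),x)$ together with Proposition 2.3(a) and Proposition 2.1(i), and the converse correctly recovers Definition 2.2(i) from the intertwining relations. You also rightly identified the one point that is easy to gloss over, namely that in the converse $f_0=f|_{G_0}$ must be deduced rather than assumed; your derivation of $f(u)=\alpha'(f(u))=f_0(u)$ from $f(u)\cdot f(u)=f(u)$ and the cancellation law (or equivalently by right-multiplying by $f(u)^{-1}$ and using Proposition 2.1(vii)) settles this cleanly.
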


\begin{rem}
Applying Propositions 2.3 and 3.4 we can
conclude that a groupoid morphism $~(f,f_{0}) : (G,
G_{0})\longrightarrow (G^{\prime}, G_{0}^{\prime})~$ is linked
with the structure functions by the relations :
\begin{equation}
\alpha^{\prime} \circ f = f_{0} \circ \alpha ,~~ \beta^{\prime}
\circ f = f_{0} \circ \beta ,~~ m^{\prime} \circ (f \times f) =
f\circ m,~~ \iota^{\prime} \circ f = f \circ \iota \label{2.1}
\end{equation}
where $~(f\times f)(x,y):=(f(x),f(y)),~ \forall~ x,y\in G\times
G.$
\end{rem}

\begin{defn}
(\cite{dumoiv}) A groupoid morphism $~( f,
f_{0} ):(G, G_{0})\longrightarrow (G^{\prime},G_{0}^{\prime})~$
satisfying the following condition:
\begin{equation}
\forall~ x,y\in G~~\hbox{such that}~~(f(x),f(y))\in
G_{(2)}^{\prime}~~~\Rightarrow~~~(x,y)\in G_{(2)}\label{2.2}
\end{equation}
will be called {\bf strong morphism} or {\bf homomorphism of
groupoids}.
\end{defn}

\begin{exmp}
Let the  symmetry groupoid $ {\cal SG}_{n}$ of the finite set\\
$ X = \{ x_{1}, x_{2},\ldots, x_{n} \} $ and the multiplicative
group $\{ +1, -1 \}$ ( regarded as groupoid over $\{ +1\}$ ). We
define the map\\[0.2cm]
 $~~~~~~~ sgn^{\sharp}: {\cal SG}_{n} \to \{ +1, -1
\},~ f\in {\cal SG}_{n} \longmapsto sgn^{\sharp}(f):= sgn(f)
$,\\[0.2cm]
where $ sgn(f)$ is the signature of the permutation $f$ of degree
$ k = | D(f)| $.

We have that {\it $~ sgn^{\sharp}: {\cal SG}_{n} \to \{ +1, -1
\}~$ is a groupoid morphism}.

Indeed, let $ f,g \in G_{(2)} $, where $ G = {\cal SG}(A,X)$ such
that $ D(f) = D(g):= A_{k}:=\{ x_{j_{1}},\ldots,
x_{j_{k}}\}\subseteq X, ~ 1\leq k \leq n.$  Then $ f $ and $ g $
are permutations of $ A_{k} $ and $ f\circ g $ is also a
permutation of $ A_{k}$. It is clearly that the condition (i) from
Definition 2.2 is verified. Also, it is well known that\\
 $sgn(f\circ g) = sgn(f)\cdot sgn(g)$. Hence
 $~ sgn^{\sharp}(m(f,g)) =
sgn^{\sharp}(f)\cdot sgn^{\sharp}(g).~$ Therefore the condition
(ii) from Definition 2.2 holds.

The map $~ sgn^{\sharp}: {\cal SG}_{n} \to \{ +1, -1 \}$ {\it is
not a  groupoid homomorphism}.

Indeed, for $ X = \{ x_{1}, x_{2}, x_{3}, x_{4} \} $ we consider
the permutations $ f, g \in {\cal SG}_{4} $, where $ f = \left (
\begin{array}{ccc} x_{1} & x_{2} &
x_{3}\\
x_{2} & x_{3} & x_{1}\\
\end{array}\right ) $ and
 $ g = \left ( \begin{array}{ccc} x_{1} & x_{3} &
x_{4}\\
x_{4} & x_{3} & x_{1}\\
\end{array}\right ).$
Then\\
 $~ sgn^{\sharp}(f) = + 1, ~ sgn^{\sharp}(g) = -1 $ and
 $~( sgn^{\sharp}(f), sgn^{\sharp}(g))\in \{ +1, -1\}\times \{ +1, -1\}
$. But $ f $ and $g$ are not composable in $ {\cal SG}_{4} $,
since $ D(f)\neq D(g).$
\end{exmp}

\section{VECTOR GROUPOIDS}

\begin{defn}
A {\bf vector groupoid over a field $K$}, is a groupoid $~(V,
\alpha, \beta, \odot, \iota,  V_0)$ such that:
\smallskip

\noindent(3.1.1) $~V$ is a vector space over $K$, and the units
set $V_0$ is a subspace of $V$.

\noindent(3.1.2) The source and the target maps $\alpha$ and
$\beta$ are linear maps.
\smallskip

\noindent(3.1.3) The inversion $~\iota:V\longrightarrow V,\
x\longmapsto \iota(x):=x^{-1}$ is a linear map and the
following condition is verified:\\[0.2cm]
$(1)~~~~~~~~~~~ x+x^{-1}=\alpha(x)+\beta(x),\ \mbox{for all } x\in
V.$
\smallskip

 \noindent(3.1.4) The map $~m:V_{(2)}:=\{(x,y)\in V\times V~|~\alpha(y)=\beta(x)\}
\to V,$
$(x,y)\longmapsto m(x,y):=x\odot y,~$  satisfy the
following conditions :
\begin{enumerate}
  \item $x\odot(y+z-\beta(x))=x\odot y+x\odot z-x$, for all $x,y,z\in V$, such that $\alpha(y)=\beta(x)=\alpha(z)$.
  \item $x\odot(ky+(1-k)\beta(x))=k(x\odot y)+(1-k)x$, for all $x,y\in V$, such that $\alpha(y)=\beta(x)$.
  \item $(y+z-\alpha(x))\odot x=y\odot x+z\odot x-x$, for all $x,y,z\in V$, such that $\alpha(x)=\beta(y)=\beta(z)$.
  \item $(ky+(1-k)\alpha(x))\odot x=k(y\odot x)+(1-k)x$ for all $x,y\in V$, such that $\alpha(x)=\beta(y)$.
\end{enumerate}
\end{defn}
When there can be no confusion we put $xy$ or $x\cdot y$ instead
of $x\odot y$.

From Definition 3.1 follows the following corollary.

\begin{cor}
Let $~(V, \alpha, \beta, \odot, \iota,  V_0)$ be a vector
groupoid. Then:

$(i)~~~$ The source and target $ \alpha, \beta : V \to V_{0} $ are
linear epimorphisms.

$(ii)~~$ The inversion $ \iota : V \to V $ is a linear
automorphism.

$(iii)~$ The fibres $\alpha^{-1}(0)$ and $ \beta^{-1}(0) $ and the
isotropy group\\ $ V(0):= \alpha^{-1}(0)\cap \beta^{-1}(0) $ are
vector subspaces of the vector space $V$.
\end{cor}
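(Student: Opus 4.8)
The plan is to assemble each of the three claims directly from the linearity hypotheses in Definition 3.1 together with the purely groupoid-theoretic facts already established in Section 2; I expect that none of the four distributivity axioms in (3.1.4) will actually be needed, since every assertion is about $\alpha$, $\beta$, and $\iota$ rather than about the multiplication.

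For part (i), I would first recall that by the very definition of a groupoid (Definition 2.1) the source and target $\alpha,\beta:V\to V_0$ are required to be surjective maps. Condition (3.1.2) asserts in addition that $\alpha$ and $\beta$ are linear. Combining these two facts immediately gives that $\alpha$ and $\beta$ are surjective linear maps, that is, linear epimorphisms, which is exactly the assertion.

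For part (ii), condition (3.1.3) already provides that $\iota:V\to V$ is linear, so it remains only to establish bijectivity. Here I would invoke Proposition 2.2(i) (equivalently Proposition 2.1(v)), which states that $\iota\circ\iota=Id_V$. Thus $\iota$ is its own two-sided inverse and is therefore a bijection; a bijective linear endomorphism is a linear automorphism, which proves (ii). (One could instead try to deduce injectivity from the relation $x+x^{-1}=\alpha(x)+\beta(x)$ in (3.1.3)(1), but the argument through $\iota\circ\iota=Id_V$ is cleaner and avoids extra computation.)

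For part (iii), the key preliminary observation is that since $V_0$ is a subspace of $V$ it contains the zero vector, so the fibres $\alpha^{-1}(0)$ and $\beta^{-1}(0)$ are genuinely well defined. Because $\alpha$ is linear, $\alpha^{-1}(0)=\ker\alpha$ is a vector subspace of $V$; likewise $\beta^{-1}(0)=\ker\beta$ is a subspace. Finally $V(0)=\alpha^{-1}(0)\cap\beta^{-1}(0)$ is an intersection of two subspaces and hence itself a subspace. The only point requiring any care is the bookkeeping that $0$ really lies in the common codomain $V_0$; once that is noted, there is no substantive obstacle, as every remaining step reduces to the standard linear-algebra facts that the kernel of a linear map and the intersection of two subspaces are again subspaces.
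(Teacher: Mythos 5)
Your proof is correct and is exactly the argument the paper intends: the corollary is stated with only the remark that it ``follows from Definition 3.1,'' and the natural justification is precisely your combination of the surjectivity of $\alpha,\beta$ from Definition 2.1 with the linearity conditions (3.1.2)--(3.1.3), the involution $\iota\circ\iota=Id_V$ from Proposition 2.2(i), and the kernel/intersection facts for part (iii). Nothing further is needed.
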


\begin{exmp}
 Let $V$ be a vector space over a field $K$. If we define the
maps $ \alpha_{0}, \beta_{0}, \iota_{0}: V\longrightarrow V,\ \
\alpha_{0}(x)=\beta_{0}(x)=0,\ \iota_{0}(x)=-x,$ and the
multiplication law $m_{0}(x,y)=x+y$, then $(V, \alpha_{0},
\beta_{0}, m_{0}, \iota_{0}, V_{0}=\{0\})$ is a vector groupoid
called  {\it vector groupoid with a single unit}. We will denote
this vector groupoid by $(V, + )$. Therefore, each vector space $
V $ over $K$ can be regarded as vector groupoid over $V_{0}
=\{0\}$.\hfill$\b$
\end{exmp}

\begin{exmp}
Let $V$ be a vector space over a field $K$. Then $ V$ has a
structure of null groupoid over $V$ ( see Example 2.1(ii) ). In
this case the structure functions are  $~\alpha = \beta =\iota =
Id_{V}$  and $ x \odot x = x $ for all $x\in V$. We have that
$V_{0} = V$ and the maps $ \alpha, \beta, \iota $ are linear.
Since $ x+\iota(x) = x+ x $ and $ \alpha(x) + \beta(x) = x+x $
imply that the condition 3.1.3(1) holds. It is easy to verify the
conditions 3.1.4(1)- 3.1.4(4) from Definition 3.1. Then $V$ is a
vector groupoid, called the {\it null vector groupoid} associated
to $V$.\hfill$\b$
\end{exmp}

\begin{exmp}
 Let $V$ be a vector space over a field $K$. We
consider the pair groupoid $( V\times V, \w{\alpha}, \w{\beta},
\w{m},\w{\iota}, \Delta_{V})$ associated to $V$ ( see Example
2.1(iii)). We have that $ V\times V$ is a vector space over $K$
and the source $\w{\alpha}$ and target $\w{\beta}$ are linear
maps. Also, the inversion $\w{\iota} : V\times V \to V\times V$ is
a linear isomorphism. Therefore it follows that the conditions
$(3.1.1)- (3.1.3) $ are satisfied. By a direct computation we
verify that the relations 3.1.4(1) - 3.1.4(4) from Definition 3.1
hold. Hence $V\times V$ is a vector groupoid called the {\it
coarse vector groupoid} or {\it pair vector groupoid} associated
to $V$.\hfill$\b$
\end{exmp}

\begin{exmp}
{\bf The vector groupoid $V^2(p,q)$}. Let $V$ be a vector space
over a field $K$ and let $p,q\in K$ such that $pq=1$. The maps
$\alpha,\beta,\iota:V^2\longrightarrow V^2$,
$\alpha(x,y):=(x,px)$, $\beta(x,y):=(qy,y)$, $\iota(x,y):=(qy,px)$
together with the multiplication law given on
$V^2_{(2)}:=\{((x,y),(qy,z))\ |\ x,y,z\in V\}\subset V^2\times
V^2$, by $(x,y)\cdot(qy,z):=(x,z)$ determine on $V^2$ a structure
of vector groupoid. This is called the {\it pair} or the {\it
coarse vector groupoid of type $(p,q)$} and it is denoted by
$V^2(p,q)$.\\
\indent If $p=q=1$, then the vector groupoid $V^2(1,1)$ coincide
with the pair vector groupoid associated to $V$(see
Example 3.3 ).\\
\indent If $n$ is a prime number and $p,q\in \mathbb{Z}_n$, such
that $pq=1$, then $\mathbb{Z}_n^2(p,q)$ is called the
\textit{modular} or \textit{cryptographic vector
groupoid}.\hfill$\b$
\end{exmp}

\begin{exmp}
Let $V$ be vector space over a field $K$. One consider the maps
 $\alpha, \beta, \iota:V^3\longrightarrow V^3$, $\alpha(x_1,x_2,x_3):=(x_1,x_1,0)$, $\beta(x_1,x_2,x_3):=(x_2,x_2,0)$,
 $\iota(x_1,x_2,x_3):=(x_2,x_1,-x_3)$ together with the multiplication law given on $V_{(2)}^3=\{((x_1,x_2,x_3),(x_2,y_2,y_3))~|~ x_1,x_2,x_3,y_2,y_3\in V\}\subset V^3 \times V^3$ by
  $(x_1,x_2,x_3)\odot(x_2,y_2,y_3):=(x_1,y_2,x_3+y_3)$.

  Then $(V^3,\alpha,\beta,\iota,\odot,V_0^3)$, where $V_0^3=\{(x,x,0)\ |\ x\in V\}$, is a vector groupoid.\hfill$\b$
\end{exmp}

\indent In the following proposition, we give, in addition to
those in Proposition 2.1, other rules of algebraic calculation in
a vector groupoid.
\renewcommand{\labelenumi}{(\roman{enumi})}
\begin{prop}
In a vector groupoid $(V, \alpha, \beta, \odot, \iota, V_0)$ the
following assertions hold :
\begin{enumerate}
  \item $0\cdot x=x,\ \forall\ x\in \alpha^{-1}(0)$;
  \item $x\cdot 0=x,\ \forall\ x\in \beta^{-1}(0)$;
  \item For all $x,\ y\in\beta^{-1}(0)$, we have $x-\alpha(x)=y-\alpha(y)\Longrightarrow x=y$;
  \item for all $x,y\in\alpha^{-1}(0)$, we have $x-\beta(x)=y-\beta(y)\Longrightarrow x=y$.
\end{enumerate}
\end{prop}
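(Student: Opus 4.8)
The plan is to treat the four assertions in two groups: parts (i)--(ii) will follow immediately from the groupoid unit axiom, while (iii)--(iv) will exploit the linearity of the structure maps together with the fact that $\alpha$ and $\beta$ restrict to the identity on $V_0$. Notably, none of the distributivity conditions 3.1.4(1)--3.1.4(4) are needed.

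For (i), I would let $x\in\alpha^{-1}(0)$ and invoke the unit axiom (G2) of Definition 2.1, which gives $\alpha(x)\odot x=x$; since $\alpha(x)=0$ this reads $0\odot x=x$. The composition is legitimate because $0\in V_0$ is a unit, so $\beta(0)=0=\alpha(x)$ places $(0,x)$ in $V_{(2)}$. Part (ii) is entirely symmetric: for $x\in\beta^{-1}(0)$ the axiom (G2) gives $x\odot\beta(x)=x$, i.e. $x\odot 0=x$.

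For (iii), I would take $x,y\in\beta^{-1}(0)$ with $x-\alpha(x)=y-\alpha(y)$ and apply the linear map $\beta$ (linear by Corollary 3.1(i)) to both sides. Since $\alpha(x),\alpha(y)\in V_0$ and Proposition 2.1(i) gives $\beta(u)=u$ for every $u\in V_0$, one has $\beta(\alpha(x))=\alpha(x)$ and $\beta(\alpha(y))=\alpha(y)$; combined with $\beta(x)=\beta(y)=0$, the equation collapses to $-\alpha(x)=-\alpha(y)$, that is $\alpha(x)=\alpha(y)$. Substituting back into the hypothesis yields $x=y$. Part (iv) would be handled identically, applying $\alpha$ instead of $\beta$ and using $\alpha(u)=u$ for $u\in V_0$ to obtain $\beta(x)=\beta(y)$, whence $x=y$.

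There is no genuine obstacle here; the only point requiring attention is the choice, in (iii)--(iv), to apply the \emph{complementary} projection $\beta$ (respectively $\alpha$) so that the term one wishes to isolate survives while the $\beta^{-1}(0)$ (respectively $\alpha^{-1}(0)$) contribution vanishes. This is exactly what turns the linear structure, rather than any groupoid composition, into the engine of the cancellation statements.
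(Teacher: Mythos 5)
Your proposal is correct and follows essentially the same route as the paper: parts (i)--(ii) via the unit axiom (G2), and parts (iii)--(iv) by applying the complementary linear projection and using that $\alpha$ and $\beta$ restrict to the identity on $V_0$ (Proposition 2.1(i)). The only difference is cosmetic---you apply $\alpha$ (resp.\ $\beta$) directly to the hypothesis and then substitute back, whereas the paper first rearranges to $x-y=\beta(x)-\beta(y)$ and chains the equalities---but the ingredients are identical.
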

\begin{proof}
(i) If $x\in\alpha^{-1}(0)$, then $\alpha(x)=0=\beta(0)$. So $(0,x)\in V_{(2)}$ and, using the condition
{\it (G2)} from Definition 2.1, one obtains that $0\cdot x=\alpha(x)\cdot x=x$.\\
(iv) Let $x,y\in\alpha^{-1}(0)$ such that $x-\beta(x)=y-\beta(y)$.
Then $ \alpha(x)= \alpha(y) = 0 $ and $ x - y = \beta(x) -
\beta(y).$  Since $ \alpha $  is linear map and applying
Proposition 2.1 (i), one obtains that $0=\alpha(x)- \alpha(y) =
\alpha(x-y)=\alpha (\beta(x)- \beta(y))=\beta(x)-\beta(y)= x-y $,
and so $x=y$.

Similarly, we prove that the assertions (ii) and (iii) hold.
\end{proof}

\begin{prop}
Let $(V, \alpha, \beta, \odot, \iota, V_0) $ be a vector groupoid.
Then:

$(i)~~~t_{\beta}:\alpha^{-1}(0)\longrightarrow\beta^{-1}(0),~
t_{\beta}(x):=\beta(x)-x~$ is a linear isomorphism.

$(ii)~~t_{\alpha}:\beta^{-1}(0)\longrightarrow\alpha^{-1}(0),~
t_{\alpha}(x):=\alpha(x)-x~$ is a linear isomorphism.
\end{prop}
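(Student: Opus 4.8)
The plan is to show that $t_\beta$ and $t_\alpha$ are nothing but the restrictions of the inversion $\iota$ to the subspaces $\alpha^{-1}(0)$ and $\beta^{-1}(0)$, and that they are mutually inverse; both assertions then follow at once. First I would use condition (3.1.3)(1), which gives $x^{-1}=\alpha(x)+\beta(x)-x$ for every $x\in V$. Hence for $x\in\alpha^{-1}(0)$ one has $t_\beta(x)=\beta(x)-x=x^{-1}$, and for $x\in\beta^{-1}(0)$ one has $t_\alpha(x)=\alpha(x)-x=x^{-1}$. Thus $t_\beta=\iota|_{\alpha^{-1}(0)}$ and $t_\alpha=\iota|_{\beta^{-1}(0)}$. (Note that by Corollary 3.1(iii) the sets $\alpha^{-1}(0)$ and $\beta^{-1}(0)$ are genuine vector subspaces, so the phrase ``linear isomorphism'' is meaningful.)

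This identification immediately disposes of two of the three points to verify. Linearity is free, since $\iota$ is a linear map by Corollary 3.1(ii); equivalently one may simply note that $x\mapsto \beta(x)-x$ and $x\mapsto \alpha(x)-x$ are linear because $\alpha,\beta$ are linear by (3.1.2). Well-definedness (the maps taking values in the claimed subspaces) follows from Proposition 2.2(i): for $x\in\alpha^{-1}(0)$ we get $\beta(x^{-1})=\alpha(x)=0$, so $x^{-1}\in\beta^{-1}(0)$, and symmetrically $\alpha(x^{-1})=\beta(x)=0$ for $x\in\beta^{-1}(0)$.

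It then remains only to see that $t_\beta$ and $t_\alpha$ are inverse to each other. Since Proposition 2.2(i) gives $\iota\circ\iota=Id_V$, the composites $t_\alpha\circ t_\beta$ and $t_\beta\circ t_\alpha$ are the restrictions of $Id_V$ to $\alpha^{-1}(0)$ and $\beta^{-1}(0)$ respectively, hence the identity maps on those subspaces; therefore $t_\beta$ is a linear bijection with inverse $t_\alpha$, which proves (i) and (ii) simultaneously. If one prefers to avoid citing the inversion, the same fact can be obtained by a direct computation: for $x\in\alpha^{-1}(0)$, using $\alpha(x)=0$ and that $\alpha$ fixes the unit $\beta(x)$ (Proposition 2.1(i)), one finds $\alpha(\beta(x)-x)=\beta(x)$, whence $t_\alpha(t_\beta(x))=\beta(x)-(\beta(x)-x)=x$, and symmetrically $t_\beta(t_\alpha(y))=y$ for $y\in\beta^{-1}(0)$. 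The only real point requiring care is this well-definedness step, i.e. checking that the images land in the correct fibres; it rests entirely on the fact that $\alpha$ and $\beta$ fix units, and beyond it the argument is purely formal.
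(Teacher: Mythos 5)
Your proof is correct, and it takes a genuinely different route from the paper's. The paper argues fibre by fibre in the classical way: linearity of $t_{\beta}$ by direct computation from the linearity of $\beta$, injectivity by invoking Proposition 3.1(iv) (the implication $x-\beta(x)=y-\beta(y)\Rightarrow x=y$ on $\alpha^{-1}(0)$), and surjectivity by exhibiting the preimage $x=\alpha(y)-y$ of a given $y\in\beta^{-1}(0)$ and checking $t_{\beta}(x)=y$. You instead observe, via axiom 3.1.3(1) in the form $x^{-1}=\alpha(x)+\beta(x)-x$, that $t_{\beta}$ and $t_{\alpha}$ are exactly the restrictions of the inversion $\iota$ to $\alpha^{-1}(0)$ and $\beta^{-1}(0)$; well-definedness then comes from $\beta\circ\iota=\alpha$ and $\alpha\circ\iota=\beta$, linearity from Corollary 3.1(ii) (or directly from linearity of $\alpha,\beta$), and bijectivity from $\iota\circ\iota=Id_{V}$, which makes $t_{\alpha}$ and $t_{\beta}$ mutually inverse. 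Your argument is shorter and more structural: it bypasses Proposition 3.1(iv) entirely, handles (i) and (ii) in one stroke, and in fact explains the paper's otherwise unmotivated choice of preimage $\alpha(y)-y$, which is precisely $t_{\alpha}(y)=y^{-1}$. Your citations check out (Proposition 2.2(i) for $\alpha\circ\iota=\beta$, $\beta\circ\iota=\alpha$, $\iota\circ\iota=Id$; Proposition 2.1(i) for $\alpha(u)=u$ on units in the alternative direct computation), and the fallback computation you sketch is also correct. The only cost of your approach is that it leans on axiom 3.1.3(1) and the inversion, whereas the paper's proof uses only the linearity of $\alpha$ and $\beta$ together with Proposition 3.1(iv); but that proposition is itself proved from the same ingredients, so nothing essential is lost.
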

\begin{proof} $(i)$ Let be $ x_{1}, x_{2} \in V $ and $ k_{1},
k_{2} \in K$. Then $ t_{\beta}( k_{1} x_{1} + k_{2} x_{2})=$\\
$=\beta( k_{1} x_{1} + k_{2} x_{2})- ( k_{1} x_{1} + k_{2} x_{2})=
k_{1}( \beta(x_{1}) - x_{1}) + k_{2}( \beta(x_{2}) - x_{2}) =$\\
$=k_{1}t_{\beta}(x_{1}) + k_{2}t_{\beta}(x_{2}). $ Hence $
t_{\beta} $ is a linear map.

Let now $ x,y \in \alpha^{-1}(0) $ such that $ t_{\beta} (x) =
t_{\beta} (y)$. Applying Proposition 3.1(iv), one obtains $ x=y$,
and so the map $ t_{\beta} $ is injective.

For any $ y\in \beta^{-1}(0)$ we take $ x = \alpha(y)-y $. Clearly
$ x\in \alpha^{-1}(0).$ We have $ t_{\beta}(x) =
\beta(\alpha(y)-y) - (\alpha(y)-y ) = \alpha(y)- \beta(y) -
\alpha(y) + y = y $, since $ \beta(y) = 0$. Hence the map $
t_{\beta} $ is surjective. Therefore $ t_{\beta} $ is a linear
isomorphism.

$(ii)~$ Similarly we prove that $ t_{\alpha}$ is a linear
isomorphism.
\end{proof}

\begin{prop}
Let $~(V, +, \cdot, \alpha, \beta, \odot, \iota, V_0)~$ be a
vector groupoid over $ K $ and $~ u\in V_{0}~$ any unit of $~V$.
The following assertions hold.

$(i)~~~$ The isotropy group $ V(u):=\{x\in V~|~
\alpha(x)=\beta(x)=u \}$ endowed with the laws $~\boxplus :
V\times V \to V~$ and $~\boxtimes : K\times V \to V~$ given by:
\begin{equation}
x\boxplus y=x+y-u,~~~\forall~x,y\in V(u)\label{3.1}
\end{equation}
\begin{equation}
k\boxtimes x = k x + (1-k)u,~~~\forall~k \in K,~x\in
V(u),\label{3.2}
\end{equation}
has a structure of vector space over $K$.

$(ii)~~$ The vector space $~( V(u), \boxplus, \boxtimes )~$
together with the restrictions of structure functions $~\alpha,
\beta, \iota~$ to $~V(u)~$ and the multiplication\\
$~\boxdot:V(u)_{(2)}= V(u)\times V(u) \to V(u)~ $ given by:
\begin{equation}
x\boxdot y = (x-u)\odot(y-u) + u ,~~~\forall~x,y\in
V(u)\label{3.3}
\end{equation}
has a structure of vector groupoid with a single unit over $K$.
\end{prop}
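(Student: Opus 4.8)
The plan is to reduce both parts to the subspace $V(0)=\alpha^{-1}(0)\cap\beta^{-1}(0)$ by means of the translation $\tau\colon V(u)\to V(0)$, $\tau(x):=x-u$, and then to transport structure along $\tau$. I would first record the tools used repeatedly: by Proposition 2.1$(i)$ we have $\alpha(u)=\beta(u)=u$; by Corollary 3.1 the maps $\alpha,\beta$ are linear and $V(0)$ is a subspace of $V$; by Proposition 3.2 we have $0\cdot a=a$ on $\alpha^{-1}(0)$ and $a\cdot 0=a$ on $\beta^{-1}(0)$; and the set $V(u)$ is a group under $\odot$ (the isotropy group at $u$).

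For $(i)$ I would first check that $\boxplus,\boxtimes$ land in $V(u)$: from linearity of $\alpha,\beta$ and $\alpha(u)=\beta(u)=u$ one gets $\alpha(x+y-u)=\beta(x+y-u)=u$ and $\alpha(kx+(1-k)u)=\beta(kx+(1-k)u)=u$. Next I would show that $\tau$ carries $V(u)$ bijectively onto the subspace $V(0)$, with inverse $w\mapsto w+u$ (indeed $\alpha(x-u)=\beta(x-u)=0$ and $\alpha(w+u)=\beta(w+u)=u$), and that it converts the new laws into the ordinary ones:
\[
\tau(x\boxplus y)=\tau(x)+\tau(y),\qquad \tau(k\boxtimes x)=k\,\tau(x).
\]
Since $V(0)$ is a genuine vector space, transporting its structure along the bijection $\tau$ turns $(V(u),\boxplus,\boxtimes)$ into a vector space over $K$, with zero vector $u=\tau^{-1}(0)$ and opposite $2u-x=\tau^{-1}(-\tau(x))$. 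That settles $(i)$.

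For $(ii)$ the cleanest route is again through $\tau$. I would first verify that $V(0)$, equipped with the ambient $+,\cdot$, the restrictions $\alpha_0:=\alpha|_{V(0)}\equiv 0$, $\beta_0:=\beta|_{V(0)}\equiv 0$, the restriction $\iota_0:=\iota|_{V(0)}$ (which is $a\mapsto -a$, since $a^{-1}=\alpha(a)+\beta(a)-a=-a$ by $(3.1.3)(1)$), and the restriction of $\odot$, is a vector groupoid with the single unit $\{0\}$. The groupoid axioms hold because $V(0)$ is the isotropy group at $0$ (a group under $\odot$, hence a groupoid over $\{0\}$, all pairs being composable); conditions $(3.1.1)$–$(3.1.3)$ are immediate; and each identity in $(3.1.4)$ is exactly the corresponding identity for $V$ read on triples from $V(0)$, where the side conditions $\alpha(\cdot)=\beta(\cdot)$ reduce to $0=0$ and the correction terms $\beta(x),\alpha(x)$ vanish. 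Then I would check that $\tau$ intertwines the two structures: using $x^{-1}=2u-x$ on $V(u)$ and the definition $x\boxdot y=(x-u)\odot(y-u)+u$,
\[
\tau(x\boxdot y)=\tau(x)\odot\tau(y),\quad \tau\circ\iota|_{V(u)}=\iota_{0}\circ\tau,\quad \alpha_{0}\circ\tau=\tau\circ\alpha|_{V(u)},\quad \tau(u)=0,
\]
and likewise for $\beta$. Because $\tau$ is a bijection preserving all the operations and carrying the unit $u$ to $0$, the structure $(V(u),\boxplus,\boxtimes,\alpha|,\beta|,\boxdot,\iota|,\{u\})$ is isomorphic to the single-unit vector groupoid on $V(0)$, hence is itself a vector groupoid with a single unit.

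I expect the main obstacle to be the bookkeeping in the two displays above and in $(3.1.4)$: one must consistently treat $u$ as the new zero, confirm that $\boxdot$ actually takes values in $V(u)$ (here $x\boxdot u=x$ uses $a\cdot 0=a$ from Proposition 3.2), and verify that the fibre/side conditions in $(3.1.4)$ are automatically satisfied after restriction to $V(0)$. None of this is deep, but it is exactly where a sign or a misplaced unit would hide; the conceptual content is merely the change of origin effected by $\tau$. In particular I would resist trying to prove $\boxdot=\boxplus$ on $V(u)$, since that need not hold: the axioms only require $(V(u),\boxdot)$ to be a vector groupoid, not that its multiplication be the vector addition.
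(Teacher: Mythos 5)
Your argument is correct, but it is organized differently from the paper's. The paper proves both parts by direct verification on $V(u)$ itself: it checks closure of $\boxplus$ and $\boxtimes$ using linearity of $\alpha,\beta$, verifies the vector space axioms (a)--(d) by hand, and then verifies 3.1.1--3.1.4 for $\boxdot$ by substituting $x-u,\,y-u,\,z-u$ into the corresponding identities for $V$ and observing that $\beta(x-u)=0$ kills the correction terms. You instead factor the whole proof through the translation $\tau(x)=x-u$ onto the subspace $V(0)=\alpha^{-1}(0)\cap\beta^{-1}(0)$, first checking that $V(0)$ with the restricted structure is a single-unit vector groupoid and then transporting along $\tau$. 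The computational core is the same --- your observation that each 3.1.4 identity on $V(0)$ is the ambient identity with the side conditions reading $0=0$ is precisely the paper's step (f) --- but your packaging buys something: one isomorphism $\tau$ intertwining all operations replaces a separate closure-and-axiom check for each of $\boxplus$, $\boxtimes$, $\boxdot$, $\iota$, and it makes transparent why $u$ plays the role of the new origin and why $\boxminus x=2u-x$ coincides with the restriction of $\iota$. Two points you handle correctly that are worth keeping explicit in a write-up: that $V(0)$ is a subspace (Corollary 3.1(iii)) and simultaneously the isotropy group at $0$, so the Brandt axioms on $V(0)$ with all pairs composable come for free; and that ``vector groupoid with a single unit'' here means only that the unit set is a singleton --- neither you nor the paper proves (nor needs) $x\boxdot y=x\boxplus y$, and your explicit refusal to attempt that is the right call.
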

\begin{proof}
$(i)~$ Using the linearity of the functions $\alpha$ and $\beta$
we verify that the laws $\boxplus$ and $\boxtimes$ given by (3.1)
and (3.2) are well-defined. For instance, for $x,y\in V(u)$ we
have $ \alpha(x\boxplus y) = \alpha(x+y-u)=\alpha(x)+ \alpha(y) -
\alpha(u) = u,$ since $ \alpha(x)=\alpha(y)=\alpha(u)=u.$
Similarly, $\beta( x\boxplus y ) =u.$ Hence $ x\boxplus y \in
V(u).$
 It is easy to verify that $ (V(u), \boxplus ) $ is a commutative
 group. Its null vector is the element $ u\in V(u)$. The opposite
 $~\boxminus x~$ of $~x\in V(u)~$ is $~\boxminus x = 2 u -
 x$.

 For any $ x, y\in V(u) $ and  $ k, k_{1}, k_{2} \in K $, the law $\boxtimes $ verify the following relations:

$(a)~~~k\boxtimes ( x\boxplus y ) = ( k\boxtimes  x ) \boxplus (k
\boxplus y ),$\\

$(b)~~~(k_{1} + k_{2})\boxtimes  x = (k_{1} \boxtimes x ) \boxplus
( k_{2}\boxtimes x ),$\\

$(c)~~~k_{1}\boxtimes (  k_{2}\boxtimes  x ) = (k_{1}k_{2})\boxtimes x ,$\\

$(d)~~~ 1 \boxtimes  x  = x $ ( here $1$ is the unit of the field
$K$ ).\\

Indeed, we have $~k\boxtimes ( x\boxplus y ) = k ( x\boxplus y ) +
(1 -k )u = k ( x + y ) + (1 -2 k )u ~$ and $~( k\boxtimes  x )
\boxplus (k \boxplus y )= ( k\boxtimes  x ) + (k \boxplus y ) - u
= k ( x +  y ) + (1 - 2k )u ~$.  Hence the equality (a) holds.

 In the same manner we prove that the equalities (b) - (d) hold.
Therefore $( V, \boxplus, \boxtimes ) $ is a vector space.

$(ii)~$ From the above assertion follows that the condition
(3.1.1) from Definition 3.1 is satisfied.

The restrictions of the linear maps $ \alpha $ and $\beta $ to
$V(u)$ are linear maps, and so the condition (3.1.2) from
Definition 3.1 holds.

Also, the restriction of the linear maps $ \iota $  to $V(u)$ is
linear map. Applying the equality 3.1.3(1) from Definition 3.1,
for any $ x\in V(u)$ we have\\
 $ x \boxplus \iota(x) = x +
\iota(x) - u = \alpha(x) + \beta(x)- u = \alpha(x) \boxplus
\beta(x).~$ Therefore the condition (3.1.3) from Definition 3.1
holds.

Let $ x,y \in V(u)$. Applying the properties of maps $ \alpha $
and $ \beta $ we have $ \alpha (x\boxdot y ) = \alpha (
(x-u)\odot(y-u) + u ) = \alpha ( (x-u)\odot(y-u)) + \alpha (u)
=$\\
$=\alpha (x-u) + \alpha(u) = \alpha(x) = u $ and $ \beta (x\boxdot
y )= u $ and so $ x\boxdot y \in V(u).$ Hence the law $ \boxdot$
given by the relation (3.3) is well-defined.

If $ x, y, z \in V(u)$ then the following equality holds:\\

$(e)~~~ x\boxdot ( y \boxplus z \boxplus ( \boxminus \beta(x))) =
( x\boxdot y ) \boxplus (  x\boxdot z  ) \boxplus ( \boxminus
x)).$\\

Indeed, we have\\[0.2cm]
$(e.1)~~ x\boxdot ( y \boxplus z \boxplus ( \boxminus \beta(x))) =
x\boxdot ( y \boxplus z \boxplus ( \boxminus u)) = x\boxdot ( y
\boxplus z \boxplus u )=$\\
$= x\boxdot ( y \boxplus z )= (x - u)\odot ( y \boxplus z - u ) +
u = (x - u)\odot ( (y - u) + (z - u )) + u. $

Replacing  in the equality 3.4.1(1) the elements $ x, y, z \in
V(u)~$  respectively with $~ x-u, y-u, z-u \in V(u),~$ we obtain
the following equality\\[0.2cm]
$(f)~~ (x - u)\odot (( y - u) +  (z - u ))= (x - u)\odot  (y - u)
+ (x - u )\odot (z-u) - (x - u), $\\[0.2cm]
since $ \beta(x-u) = 0.$

Using the relation (f), the equality $(e.1)$ becomes\\[0.2cm]
$(e.2)~~ x\boxdot ( y \boxplus z \boxplus ( \boxminus \beta(x))) =
(x-u)\odot (y-u) + (x-u)\odot (z-u)+ 2 u - x.$\\

On the other hand we have\\[0.2cm]
$(e.3)~~~ ( x\boxdot y ) \boxplus (  x\boxdot z  ) \boxplus (
\boxminus x))=( ( x\boxdot y ) \boxplus (  x\boxdot z  )) \boxplus
( 2 u - x)=$\\
$= (  x\boxdot y  +  x\boxdot z - u  ) \boxplus ( 2 u - x) =
x\boxdot y  +  x\boxdot z -  x =$\\
$= (x-u)\odot (y-u) + (x-u)\odot (z-u)+ 2 u - x.$\\

Using (e.2) and (e.3) we obtain the equality (e). Hence, the
relation 3.4.1(1) from Definition 3.1 holds.

In the same manner we can prove that the relations 3.1.4(2) -
3.1.4(4) from Definition 3.1 are verified.
\end{proof}

\indent We call $(V(u), \boxplus, \boxtimes, \alpha, \beta,
\boxdot, \iota, V_{0}(u)=\{u\}) $ the {\it isotropy vector
groupoid} at $u\in V_{0}$ of $V$, when one refers to the above
structure given on it.

\begin{defn}
Let $ ( V_1, \alpha_1, \beta_1, V_{1,0} )$ and  $ ( V_2, \alpha_2,
\beta_2, V_{2,0} )$ be two vector groupoids.

A groupoid morphism ( resp. groupoid homomorphism ) $
f:V_1\longrightarrow V_2 $ with property that $ f $ is a linear
map, is called  {\bf vector groupoid morphism} ( resp. {\bf vector
groupoid homomorphism} ).
\end{defn}

\begin{exmp}
Let $(V,\alpha, \beta, \odot , \iota , V_{0})$ be a vector
groupoid. We consider the pair vector groupoid $ ( V_{0}\times
V_{0}, \w{\alpha}, \w{\beta}, \w{m}, \w{\iota}, \Delta_{V_{0}}) $.
Then\\
 {\it the anchor map $(\alpha, \beta): V \to V_{0}\times
V_{0} $ is a homomorphism of vector groupoids between the vector
groupoids $ V $ and $ V_{0}\times V_{0}$}.

Indeed, if we denote $ (\alpha, \beta):=f $ and consider the
elements $ x,y\in G $ such that  $ ( f(x), f(y))\in (V_{0}\times
V_{0})_{(2)}$, then $ \w{\beta}(f(x))= \w{\alpha}(f(y)) $ and we
have $ \w{\beta}(\alpha(x), \beta(x))= \w{\alpha}(\alpha(y),
\beta(y))~\Rightarrow ~ (\beta(x),
\beta(x))=(\alpha(y),\alpha(y))~\Rightarrow~ \beta(x))=\alpha(y)
$, i.e. $ (x,y)\in V_{(2)}. $ Therefore the condition (i) from
Definition 2.2 holds.

For $(x,y)\in V_{(2)}$ we have

 $ f(m(x,y))= f(xy)=(\alpha(xy),
\beta(xy))= (\alpha(x), \beta(y)) ~$ and

 $ \w{m}(f(x),f(y))=
\w{m}( (\alpha(x),\beta(x)), (\alpha(y),\beta(y))) =
(\alpha(x),\beta(y)).$

\indent Hence the equality (ii) from Definition 2.2 is verified.

Let now two elements $ x,y \in V$ such that $(f(x),f(y))\in
(V_{0}\times V_{0})_{(2)}.$ Then $ \w{\beta}(f(x)) =
\w{\alpha}(f(y))$. Since $ f(x) = (\alpha(x), \beta(x)) $ and $
f(y) = (\alpha(y), \beta(y)) $ we deduce that $ (\beta(x),
\beta(x))=(\alpha(y), \alpha(y)) $. Therefore $ \beta(x) = \alpha
(y) $ and $(x,y)\in G_{(V)}.$ Therefore the condition (2.2) from
Definition 2.3 is satisfied.

Hence $ f : V \to V_{0}\times V_{0} $ is a groupoid homomorphism.

Let $ x, y \in V $ and $ a,b\in K$. Since $ \alpha, \beta $ are linear maps, we have\\
$ f(ax+by) = (\alpha(ax+by), \beta(ax+by)) = ( a \alpha(x) +
b\alpha(y), a \beta(x) + b\beta(y))= $\\
$= a(\alpha(x) , \beta(x)) + b (\alpha(y) , \beta(y))= a f(x) + b
f(y) $, i.e. $f$ is a linear map.

Therefore, the conditions from Definition 3.2 are verified. Hence
$f$ is a vector groupoid homomorphism.\hfill$\b$
\end{exmp}

\section{ALGEBRAIC CONSTRUCTIONS OF VECTOR GROUPOIDS}

In this section we shall give some important ways of building up
new vector groupoids.

{\bf 1. Direct product of two vector groupoids}. Let given the
vector groupoids $ ( V, \alpha_{V}, \beta_{V},
\odot_{V},\iota_{V}, V_{0}) $ and $ ( W, \alpha_{W}, \beta_{W},
\odot_{W}, \iota_{W}, W_{0}) $. We have that $ V_{0}\times W_{0}$
is a vector subspace of the direct product $ V\times W$  of vector
spaces $ V$ and $W$.

We can easy prove that $ V\times W$ endowed with the structure
functions $ \alpha_{V\times W}, \beta_{V\times
W}, \odot_{V\times W} $ and $ \iota_{V\times W} $ given by\\
$ \alpha_{V\times W}(v,w):=( \alpha_{V}(v), \alpha_{W}(w)),~
\beta_{V\times W}(v,w):=( \beta_{V}(v), \beta_{W}(w)),$\\
$(v_{1}, w_{1})\odot_{V\times W} (v_{2}, w_{2}):= ( v_{1}
\odot_{V} v_{2}, w_{1} \odot_{W} w_{2} ), ~ \iota_{V\times W}(v,w):=( \iota_{V}(v), \iota_{W}(w))$ \\
 for all $ v, v_{1}, v_{2} \in V $ and $ w, w_{1}, w_{2} \in W $,
is a vector groupoid over $V_{0}\times W_{0}$.

This vector groupoid is called the {\it direct product of vector
groupoids} $(V,V_{0})$ and $(W,W_{0})$.

By a direct computation we can verify that the projections\\
 $pr_{V} : V\times W \to V$ and $ pr_{W} : V\times W \to W$ are
morphisms of vector groupoids, called the {\it canonical
projections} of the vector groupoid $ V\times W$ onto vector
groupoid $V$ and $ W$, respectively. The following assertion holds

{\it  The direct product of two transitive vector groupoids is
also a transitive vector groupoid}.

{\bf 2. Trivial vector groupoid  ${\cal TVG}(V,W) $}. Let $W$ be a
vector subspace of a vector space $V$ over $K.$ The set $~{\cal
V}:= W\times V \times W ~$ has a natural structure of vector
space. The set $ {\cal V}_{0}: = \{ (w, 0, w)\in {\cal V} ~|~ w\in
W\} $ is a vector subspace of ${\cal V}$ (here $ 0$ is the null
vector of $V$ ). We introduce on $ {\cal V}:= W\times V \times W $
the structure functions $ \alpha_{\cal V}, \beta_{\cal V},
\odot_{\cal V} $ and $ \iota_{\cal V}$ as follows.

For all $~(w_{1},v,w_{2})\in {\cal V} $, the source and target $
\alpha_{\cal V}, \beta_{\cal V}: {\cal V} \to {\cal V}_{0} $ are
defined by
\smallskip

$\alpha_{\cal V}(w_{1},v, w_{2}):= (w_{1}, 0, w_{1});~~~
\beta_{\cal V}(w_{1},v, w_{2}):= (w_{2}, 0, w_{2}).$
\smallskip

 The partially multiplication  $\odot_{\cal V}: {\cal V}_{(2)} \to {\cal V} $,
where

 $ {\cal V}_{(2)} = \{ ( (w_{1},
v_{1}, w_{2}), (w_{2}^{\prime}, v_{2}, w_{3}) ) \in {\cal V}\times
{\cal V} ~|~ w_{2} = w_{2}^{\prime} \}$  and the inversion map $
\iota_{\cal V}: {\cal V}\to {\cal V}$ are given by
\smallskip

$ (w_{1}, v_{1}, w_{2})\odot_{\cal V} (w_{2}, v_{2}, w_{3}):= (
w_{1}, v_{1}+ v_{2}, w_{3});~~ \iota_{\cal V}(w_{1}, v, w_{2}):=(
w_{2}, -v, w_{1} )$.\\

 It is easy to verify that the conditions of Definition
2.1 are satisfied. Then $~( {\cal V}, \alpha_{\cal V}, \beta_{\cal
V}, \odot_{\cal V}, \iota_{\cal V}, {\cal V}_{0})~$ is
 a groupoid. Also, the condition (3.1.1) from Definition 3.1 is
 verified.

 Let now two elements $ x,y\in {\cal V}$ and $ a,b\in K$
 where $ x = (w_{1}, v_{1}, w_{2}) $ and $ y = (w_{3}, v_{2}, w_{4})
 $. We have

$ \alpha_{\cal V}(a x + b y)= \alpha_{\cal V}(a w_{1}+ b w_{3}, a
v_{1}+ b v_{2}, a w_{2}+ b w_{4})=$\\
$= ( a w_{1}+ b w_{3}, 0, a w_{1}+ b w_{3}) = a( w_{1}, 0,  w_{1})
+ b( w_{3}, 0, w_{3}) = a \alpha_{\cal V}(w_{1}, v_{1}, w_{2})
+$\\
$+b \alpha_{\cal V}(w_{3}, v_{2}, w_{4}) = a \alpha_{\cal V}(x) +
b \alpha_{\cal V}(y).$

 It follows that $ \alpha_{\cal V}$ is a linear map. Similarly we prove that $ \beta_{\cal V}$ is a linear map.
Therefore the conditions (3.1.2) from Definition 3.1 hold.

For $ x = (w_{1}, v_{1}, w_{2})\in {\cal V} $ and $ y = (w_{3},
v_{2}, w_{4}) \in {\cal V}$ and $ a,b\in K$, we have

$ \iota_{\cal V}(a x + b y)= \iota_{\cal V}( a w_{1}+ b w_{3}, a
v_{1}+ b v_{2}, a w_{2}+ b w_{4} )= $\\
$=( a w_{2}+ b w_{4}, - a v_{1} - b v_{2}, a w_{1}+ b w_{3}) = a (
w_{2}, - v_{1} ,  w_{1}) + b ( w_{4}, - v_{2}, w_{3} )=$\\
$= a \iota_{\cal V}( w_{1},  v_{1}, w_{2}) + b \iota_{\cal V}(
w_{3}, v_{2}, w_{4}) = a \iota_{\cal V}(x) + b \iota_{\cal V}(y).
$

It follows that $ \iota_{\cal V}$ is a linear map. Also

$x + \iota_{V}(x) = (w_{1}, v_{1}, w_{2}) + (w_{2}, - v_{1},
w_{1}) = (w_{1} + w_{2}, 0, w_{1} + w_{2}) =$\\
$=(w_{1}, 0, w_{1}) + ( w_{2}, 0, w_{2})= \alpha_{V}(x) +
\beta_{V}(x).$

Hence the condition (3.1.3) from Definition 3.1 holds.

For to verify the relation $ 3.1.4 (1)$ from Definition 3.1 we
consider the arbitrary elements  $ x, y, z \in {\cal V} $ where $
x = (w_{1}, v_{1}, w_{2} ), y = ( w_{3}, v_{2}, w_{4}) $ and $ z =
(w_{5}, v_{3}, w_{6}) $  such that $ \alpha_{\cal V}(y) =
\beta_{\cal V}(x) =\alpha_{\cal V}(z) $.  Then $ w_{2} = w_{3} =
w_{5} $ and follows $ x = (w_{1}, v_{1}, w_{2} ), y = ( w_{2},
v_{2}, w_{4}) $ and $ z = (w_{2}, v_{3}, w_{6}) $.

 For all $ k\in K$ we have

 $(i)~~~ x\odot_{\cal V}( y + z - \beta_{\cal V}(x)) = (w_{1}, v_{1}, w_{2} )\odot_{\cal
 V}( ( w_{2}, v_{2}, w_{4}) +$\\
 $+ (w_{2}, v_{3}, w_{6}) - (w_{2}, 0, w_{2}) )
 =(w_{1}, v_{1}, w_{2} )\odot_{\cal
 V} ( w_{2}, v_{2} + v_{3}, w_{4} + w_{6} - w_{2}) =$\\
 $= ( w_{1}, v_{1} + v_{2} +
 v_{3}, w_{4} + w_{6} - w_{2})~$ and

$(ii)~~~ x\odot_{\cal V} y + x\odot_{\cal V} z - x = (w_{1},
v_{1}, w_{2} )\odot_{\cal V}(  w_{2}, v_{2}, w_{4}) + $\\
$+ (w_{1}, v_{1}, w_{2} )\odot_{\cal V}(  w_{2}, v_{3}, w_{6}) -
(w_{1}, v_{1}, w_{2} )= ( w_{1}, v_{1}+ v_{2}, w_{4} ) + $\\
$+ ( w_{1}, v_{1}+ v_{3}, w_{6} ) -(w_{1}, v_{1}, w_{2} ) = (
w_{1}, v_{1}+ v_{2} + v_{3}, w_{4} + w_{6}- w_{2} ).$

Using (i) and (ii) we obtain $ x\odot_{\cal V}( y + z -
\beta_{\cal V}(x)) = x\odot_{\cal V} y + x\odot_{\cal V} z - x $.
Hence the condition 3.1.4 (1) from Definition 3.1 holds.

Let now $ x = (w_{1}, v_{1}, w_{2} ), y = ( w_{2}, v_{2}, w_{4}) $
and $ k\in K$. We have

$(iii)~~ x \odot_{\cal V}( k y + (1-k)\beta_{\cal V}(x) ) =
(w_{1}, v_{1}, w_{2} )\odot_{\cal V} ( k (w_{2}, v_{2}, w_{4})
+$\\
$+(1-k)(w_{2}, 0, w_{2}) )= (w_{1}, v_{1}, w_{2} )\odot_{\cal V} (
w_{2}, k v_{2}, k w_{4} + (1-k)w_{2}) =$\\
$= ( w_{1}, v_{1}+ k v_{2}, k w_{4} + (1-k)w_{2})~ $ and

$(iv)~~ k ( x \odot_{\cal V} y ) + (1-k) x = k ((w_{1}, v_{1},
w_{2} )\odot_{\cal V} ( (w_{2}, v_{2}, w_{4}) ) +$\\
$+ (1-k)(w_{1}, v_{1}, w_{2} )= k (w_{1}, v_{1} + v_{2}, w_{4} )+
(1-k)(w_{1}, v_{1}, w_{2} ) =$\\
$= ( w_{1}, v_{1}+ k v_{2}, k w_{4} + (1-k)w_{2})~ $

Using the equalities (iii) and (iv) we obtain that the condition
3.1.4 (2) from Definition 3.1 holds.

In the same manner we prove that the conditions 3.1.4 (3) and
3.1.4 (4) hold. Hence $ {\cal V}: = W\times V\times W $ is a
vector groupoid over $ {\cal V}_{0}$. Its set of units  can be
identified with the vector subspace $ W$ of $V$.

The vector groupoid $~( {\cal V}:= W\times V\times W, \alpha_{\cal
V}, \beta_{\cal V}, \odot_{\cal V}, \iota_{\cal V}, {\cal
V}_{0})~$ is called the {\it trivial vector groupoid} associated
to pair of vector spaces $(V, W)$  with $ W \subseteq V. $  This
vector groupoid is denoted by $ {\cal TVG}(V,W)$. The isotropy
group at $~u = ( w, 0 ,w) \in {\cal V}_{0}$ is $~V(u) = \{
(w,v,w)~|~ v\in V\}~$ which identify with  the group $~( V, + )$.

{\bf 3. Whitney sum of two vector groupoids over the same base}.
Let $ ( V, \alpha_{V}, \beta_{V}, \odot_{V},\iota_{V}, V_{0}) $
and  $ ( V^{\prime}, \alpha_{V^{\prime}}, \beta_{V^{\prime}},
\odot_{V^{\prime}},\iota_{V^{\prime}}, V_{0}) $ be two vector
groupoids over the same base ( i.e. $ V$ and $V^{\prime}$ have the
same units). The set\\
$ V\oplus V^{\prime}:= \{~(v,v^{\prime})\in V\times V^{\prime}~|~
\alpha_{V}(v) = \alpha_{V^{\prime}}(v^{\prime}), \beta_{V}(v) =
\beta_{V^{\prime}}(v^{\prime})~\}~$
 has a natural structure of vector space. It is clearly that\\
  $ \Delta_{V_{0}} = \{ (u,u)\in
V_{0}\times V_{0}~|~ u\in V_{0}\} \subseteq V\oplus V^{\prime}~$
 is a vector subspace.

We introduce on  $ {\cal W}:= V\oplus V^{\prime}$ the structure
functions $ \alpha_{\cal W}, \beta_{\cal W}, \odot_{\cal W} $ and
$ \iota_{\cal W}$ as follows.

The source and target $ \alpha_{\cal W}, \beta_{\cal W}: {\cal W}
\to \Delta_{V_{0}}$ are defined by\\

$\alpha_{\cal W}(v,v^{\prime}):= ( \alpha_{V}(v) ,
\alpha_{V}(v));~~~ \beta_{\cal W}(v,v^{\prime}):= ( \beta_{V}(v) ,
\beta_{V}(v)), ~~ (v,v^{\prime})\in {\cal W}.$\\

The partially multiplication  $\odot_{\cal W}: {\cal W}_{(2)} \to
{\cal W} ,$ where\\
 $ {\cal W}_{(2)} = \{ ( (v_{1},
v_{1}^{\prime}), ((v_{2}, v_{2}^{\prime}) ) \in {\cal W}\times
{\cal W} ~|~ \beta_{V}(v_{2}) = \alpha_{V}(v_{1}) \}$ and the
inversion map $ \iota_{\cal W}: {\cal V}\to {\cal W}$ are given
by\\

$ (v_{1}, v_{1}^{\prime})\odot_{\cal W}  (v_{2}, v_{2}^{\prime}):=
( v_{1}\odot_{V} v_{2}, v_{1}^{\prime}\odot_{V^{\prime}}
v_{2}^{\prime} );~~~ \iota_{\cal W}(v,v^{\prime}):= ( \iota_{V}(v)
, \iota_{V^{\prime}}(v^{\prime}))$.\\[0.2cm]

By a direct computation we prove that the conditions of Definition
2.1 are satisfied. Then $~( {\cal W}:= V\oplus V^{\prime},
\alpha_{\cal W}, \beta_{\cal W}, \odot_{\cal W}, \iota_{\cal W},
\Delta_{V_{0}})~$ is
 a groupoid. Also, the condition (3.1.1) from Definition 3.1 is
 verified.

 Let now two elements $ x,y\in {\cal W}$ and $ a,b\in K$
 where $ x = (v_{1}, v_{1}^{\prime}) $ and $ y = (v_{2}, v_{2}^{\prime})
 $.
We have

$ \alpha_{\cal W}(a x + b y)= \alpha_{\cal W}(a v_{1}+ b v_{2}, a
v_{1}^{\prime}+ b v_{2}^{\prime} )= ( \alpha_{V}(a v_{1}+ b
v_{2}), \alpha_{V}(a v_{1}+ b v_{2})) = ( a \alpha_{V}(v_{1})+ b
\alpha_{V}(v_{2}), a \alpha_{V}(v_{1})+ b \alpha_{V}(v_{2})) $ and

$a \alpha_{\cal W}(x) + b \alpha_{\cal W}(y)= a \alpha_{\cal W}(
v_{1}, v_{1}^{\prime}) + b \alpha_{\cal W}( v_{2}, v_{2}^{\prime})
= a ( \alpha_{V}(v_{1}), \alpha_{V}(v_{1}) ) + b (
\alpha_{V}(v_{2}) , \alpha_{V}(v_{2}) )= ( a \alpha_{V}(v_{1}) + b
\alpha_{V}(v_{2}) , a \alpha_{V}(v_{1}) + b \alpha_{V}(v_{2})) $
since $ \alpha_{V}$ is a linear map. It follows that $
\alpha_{\cal W}$ is a linear map.

Similarly we obtain that $ \beta_{\cal W}$ is a linear map.
Therefore the conditions (3.1.2) from Definition 3.1 hold.

For $ x = (v_{1}, v_{1}^{\prime})\in {\cal W} $ and $ y = (v_{2},
v_{2}^{\prime}) \in {\cal W}$ and $ a,b\in K$, we have
successively

$ \iota_{\cal W}(a x + b y)= \iota_{\cal W}(a v_{1}+ b v_{2}, a
v_{1}^{\prime}+ b v_{2}^{\prime} )= ( \iota_{V}(a v_{1}+ b v_{2}),
\iota_{V^{\prime}}(a v_{1}^{\prime}+ b v_{2}^{\prime})) = ( a
 \iota_{V}(v_{1})+ b  \iota_{V}(v_{2}), a
\iota_{V^{\prime}}(v_{1}^{\prime})+ b
\iota_{V^{\prime}}(v_{2}^{\prime})) = a (
 \iota_{V} (v_{1}),\iota_{V^{\prime}}(v_{1}^{\prime}))+ b (\iota_{V} (v_{2}),\iota_{V^{\prime}}(v_{2}^{\prime}))
= a \iota_{W}(v_{1}, v_{1}^{\prime}) + b  \iota_{W} ( v_{2},
v_{2}^{\prime}) = a \iota_{W} (x) + b \iota_{W}(y),$ since
$\iota_{V} $ and $ \iota_{V^{\prime}}$  are linear map.

Using the equalities 3.1.3(1) for the inversion maps $ \iota_{V} $
and $ \iota_{V^{\prime}}$ we have\\
$ x + \iota_{W}(x) = ( v, v^{\prime}) + (
\iota_{V}(v),\iota_{V^{\prime}} (v^{\prime}))=  ( v+ \iota_{V}(v),
 v^{\prime}+ \iota_{V^{\prime}}(v^{\prime}))=$\\
 $= ( \alpha_{V}(v) +
\beta_{V}(v), \alpha_{{V}^{\prime}}(v^{\prime}) +
\beta_{V^{\prime}}(v^{\prime}))= ( \alpha_{V}(v) + \beta_{V}(v),
\alpha_{V}(v) + \beta_{V}(v))=$\\
$= \alpha_{W}(v,v^{\prime}) + \beta_{W}(v,v^{\prime})=
\alpha_{W}(x) + \beta_{W}(x) $ for any $ x =(v,v^{\prime})\in W.$

 Hence the conditions (3.1.3) from Definition 3.1 hold.

For to verify the relation $ 3.1.4 (1)$ from Definition 3.1 we
consider the arbitrary elements  $ x, y, z \in {\cal W} $ where $
x = (v_{1}, v_{1}^{\prime}), y = (v_{2}, v_{2}^{\prime}) $ and $z
= (v_{3}, v_{3}^{\prime}) $. We assume that $\alpha_{\cal W}(y) =
\beta_{\cal W}(x) =\alpha_{\cal W}(z) $.

Applying the properties of the structure functions of the vector
groupoids $ V$ and $ V^{\prime} $, we have

$y + z - \beta_{\cal W}(x) =  (v_{2}, v_{2}^{\prime})+ (v_{3},
v_{3}^{\prime})- \beta_{\cal W}(v_{1}, v_{1}^{\prime})=$\\
$= (v_{2}+ v_{3}, v_{2}^{\prime}+ v_{3}^{\prime})-
(\beta_{V}(v_{1}), \beta_{V}(v_{1}) )= (v_{2}+ v_{3}-
\beta_{V}(v_{1}) , v_{2}^{\prime} + v_{3}^{\prime}-
\beta_{V}(v_{1}) ) =$\\
$= ( v_{2}+ v_{3}- \beta_{V}(v_{1}) , v_{2}^{\prime} +
v_{3}^{\prime}- \beta_{V^{\prime}}(v_{1}^{\prime}) )$ and

$(a)~~ x\odot_{\cal W}( y + z - \beta_{\cal W}(x)) = (v_{1},
v_{1}^{\prime})\odot_{\cal W}(v_{2}+ v_{3}- \beta_{V}(v_{1}) ,
v_{2}^{\prime} + v_{3}^{\prime}-
\beta_{V^{\prime}}(v_{1}^{\prime}) )=$\\
$= (  v_{1} \odot_{V}(v_{2}+ v_{3}- \beta_{V}(v_{1}),
v_{1}^{\prime} \odot_{V^{\prime}}( v_{2}^{\prime} +
v_{3}^{\prime}- \beta_{V^{\prime}}(v_{1}^{\prime}) ).$

On the other hand we have

$(b)~~ x\odot_{\cal W} y + x\odot_{\cal W} z - x =(v_{1},
v_{1}^{\prime})\odot_{\cal W}(v_{2}, v_{2}^{\prime}) + (v_{1},
v_{1}^{\prime})\odot_{\cal W}(v_{3}, v_{3}^{\prime}) -$\\
$-(v_{1}, v_{1}^{\prime})= ( v_{1}\odot_{V}v_{2},
v_{1}^{\prime}\odot_{V^{\prime}}v_{2}^{\prime} ) + (
v_{1}\odot_{V}v_{3},
v_{1}^{\prime}\odot_{V^{\prime}}v_{3}^{\prime} ) - (v_{1},
v_{1}^{\prime})=$ \\
$=( v_{1}\odot_{V}v_{2} +  v_{1}\odot_{V}v_{3} - v_{1},
v_{1}^{\prime}\odot_{V^{\prime}}v_{2}^{\prime}  +
v_{1}^{\prime}\odot_{V^{\prime}}v_{3}^{\prime}  - v_{1}^{\prime}
).$

Using now the relations (a), (b) and the relations $3.1.4(1)$ for
$V$ and $ V^{\prime} $, we obtain the equality $ x\odot_{\cal W}(
y + z - \beta_{\cal W}(x)) = x\odot_{\cal W} y + x\odot_{\cal W} z
- x.$ Hence the condition 3.1.4 (1) holds.

We verify now the relation $3.1.4 (4)$. For this, let $ x =
(v_{1}, v_{1}^{\prime} )\in {\cal W},~ y = (v_{2},
v_{2}^{\prime})\in {\cal W} $  such that $\alpha_{\cal W}(y) =
\beta_{\cal W}(x) $ and $ k \in K.$ We have

$(c)~~(k y + (1-k) \alpha_{\cal W}(x)) \odot _{\cal W}x =$\\
$= ( k v_{2} + ( 1-k)\alpha_{V}(v_{1}), k v_{2}^{\prime} + (
1-k)\alpha_{V^{\prime}}(v_{1}^{\prime}))\odot_{\cal W}(v_{1},
v_{1}^{\prime} ) =$\\
$= ( (k v_{2} + ( 1-k)\alpha_{V}(v_{1}))\odot_{V} v_{1}, ( k
v_{2}^{\prime}+
(1-k)\alpha_{V^{\prime}}(v_{1}^{\prime}))\odot_{V^{\prime}}v_{1}^{\prime}
)$ and

$(d)~~k (y \odot_{\cal W} x ) + (1-k)x =  k ( (v_{2},
v_{2}^{\prime})\odot_{\cal W}(v_{1}, v_{1}^{\prime} )) +
(1-k)(v_{1}, v_{1}^{\prime} ) =$\\
$= ( k( v_{2}\odot_{V} v_{1}) + (1-k)v_{1}, k (
v_{2}^{\prime}\odot_{V^{\prime}} v_{1}^{\prime}) +
(1-k)v_{1}^{\prime} )$.

Using the equalities (c) and (d) and the relations $3.1.4(4)$ for
$V$ and $ V^{\prime} $, we obtain that the condition 3.1.4 (4)
holds.

In the same manner we prove that the conditions 3.1.4 (2) and
3.1.4 (3) hold. Hence $ V\oplus V^{\prime} $ is a vector groupoid.

The vector groupoid $~( {\cal W}:= V\oplus V^{\prime},
\alpha_{\cal W}, \beta_{\cal W}, \odot_{\cal W}, \iota_{\cal W},
\Delta_{V_{0}})~$ is called the {\it Whitney sum}  of the vector
groupoids $~(V, V_{0})~$ and $~(V^{\prime}, V_{0}).$ The base of
this vector groupoid can be identified with $ V_{0}$.

\begin{prop}
{\it If $~(V, V_{0})~$ and $~(V^{\prime}, V_{0})$ are transitive
vector groupoids, then the Whitney sum $~(V\oplus V^{\prime},
\Delta_{V_{0}})~$ is a transitive vector groupoid.}
\end{prop}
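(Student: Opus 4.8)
The plan is to show that the anchor map of the Whitney sum is surjective, using transitivity of the two summands. Recall that a vector groupoid $(G,G_0)$ is transitive precisely when its anchor map $(\alpha,\beta):G\to G_0\times G_0$ is surjective; here the base of $V\oplus V'$ is identified with $\Delta_{V_0}\cong V_0$, so the anchor map sends $\mathcal W:=V\oplus V'$ into $\Delta_{V_0}\times\Delta_{V_0}$, and transitivity means that for every pair of units $(u_1,u_2)\in V_0\times V_0$ there is an element $x\in\mathcal W$ with $\alpha_{\mathcal W}(x)=(u_1,u_1)$ and $\beta_{\mathcal W}(x)=(u_2,u_2)$ (after the identification $\Delta_{V_0}\cong V_0$).

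First I would fix an arbitrary pair $u_1,u_2\in V_0$. Since $(V,V_0)$ is transitive, its anchor map is onto, so there exists $v\in V$ with $\alpha_V(v)=u_1$ and $\beta_V(v)=u_2$. Likewise, transitivity of $(V',V_0)$ yields $v'\in V'$ with $\alpha_{V'}(v')=u_1$ and $\beta_{V'}(v')=u_2$. The crucial observation is that the chosen $v$ and $v'$ automatically satisfy the compatibility conditions defining $\mathcal W=V\oplus V'$, namely $\alpha_V(v)=\alpha_{V'}(v')=u_1$ and $\beta_V(v)=\beta_{V'}(v')=u_2$; hence the pair $(v,v')$ is a genuine element of $\mathcal W$. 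Then by the very definition of the structure functions, $\alpha_{\mathcal W}(v,v')=(\alpha_V(v),\alpha_V(v))=(u_1,u_1)$ and $\beta_{\mathcal W}(v,v')=(\beta_V(v),\beta_V(v))=(u_2,u_2)$, which under the identification $\Delta_{V_0}\cong V_0$ are exactly $u_1$ and $u_2$.

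This exhibits a preimage of the arbitrary pair $(u_1,u_2)$ under the anchor map of $\mathcal W$, so the anchor map is surjective and $V\oplus V'$ is transitive. I would close by noting that $V\oplus V'$ is already known to be a vector groupoid from the construction preceding the proposition, so only surjectivity of the anchor needs to be checked here.

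I do not anticipate a serious obstacle: the argument is essentially a bookkeeping exercise that the two independently chosen lifts $v$ and $v'$ share the same source and target units and therefore assemble into an element of the fibred-product space $\mathcal W$. The one point deserving care is keeping straight the identification of the base $\Delta_{V_0}$ with $V_0$, so that ``transitivity'' is tested against pairs in $V_0\times V_0$ rather than against pairs in $\Delta_{V_0}\times\Delta_{V_0}$; once that identification is made explicit the verification is immediate.
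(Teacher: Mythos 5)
Your proof is correct and follows the same strategy the paper indicates, namely reducing transitivity to surjectivity of the anchor $(\alpha_{\mathcal W},\beta_{\mathcal W}):\mathcal W\to\Delta_{V_0}\times\Delta_{V_0}$. In fact the paper's printed proof consists only of the sentence stating that this surjectivity must be shown, so your verification (lifting an arbitrary pair $u_1,u_2\in V_0$ separately through the anchors of $V$ and $V'$ and observing that the two lifts assemble into an element of the fibred product) supplies exactly the detail the paper omits.
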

\begin{proof}
It must prove that the anchor $ (\alpha_{\cal W}, \beta_{\cal W})
: {\cal W} \to \Delta_{V_{0}}\times \Delta_{V_{0}} $ is
surjective.
\end{proof}

 If $( V \oplus V^{\prime}, \Delta_{V_{0}})$ is the Whitney sum of
vector groupoids $ (V,V_{0}) $ and $ ( V^{\prime}, V_{0})$,  then
the projections maps $p: V \oplus V^{\prime}\to V $ and $
p^{\prime}: V\oplus V^{\prime}\to V^{\prime}$ defined by
$p(v,v^{\prime})= v $ and $ p^{\prime}(v ,v^{\prime})= v^{\prime}$
are morphisms of vector groupoids.

\begin{thm}
 {\it Let $ (V,V_{0}) $ and $ ( V^{\prime}, V_{0})$ be two vector groupoids.
 The triple $( V\oplus V^{\prime}, p, p^{\prime})$ verifies the {\bf universal
 property of the Whitney sum}:

for all triple $( U, q ,q^{\prime})$ composed by vector groupoid
$( U , \alpha_{U}, \beta_{U}, \odot_{U}, \iota_{U}, V_{0})$ and
two morphisms of vector groupoids $~
V^{\prime}~\stackrel{q^{\prime}}{\longleftarrow}~U~
\stackrel{q}{\longrightarrow}~V$,
 there exists a unique morphism of vector groupoids $~ \varphi: U~\to~V \oplus V^{\prime}~$ such that
the following diagram:\\[-0.3cm]
$$ V^{\prime}\stackrel{p^{\prime}}{\longleftarrow} V\oplus V^{\prime}\stackrel{p}{\longrightarrow} V$$\\[-0.9cm]
$$\stackrel{q^{\prime}}~{\nwarrow}~~~~{\uparrow}\varphi~~~~{\nearrow}q$$\\[-1cm]
$$U$$\\[-0.7cm]
is commutative.}
\end{thm}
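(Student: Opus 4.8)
The plan is to produce the unique $\varphi$ explicitly and then verify it has the required properties; the whole argument is the standard verification for a fibered product over a fixed base. The commutativity constraints $p\circ\varphi=q$ and $p^{\prime}\circ\varphi=q^{\prime}$ force, for every $u\in U$, the first component of $\varphi(u)$ to be $q(u)$ and the second to be $q^{\prime}(u)$, so the only candidate is the map $\varphi:U\to {\cal W}:=V\oplus V^{\prime}$ given by $\varphi(u):=(q(u),q^{\prime}(u))$. This observation simultaneously disposes of uniqueness: any $\psi$ making the diagram commute must satisfy $p(\psi(u))=q(u)$ and $p^{\prime}(\psi(u))=q^{\prime}(u)$, i.e. $\psi=\varphi$. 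Hence the real work is to show that this $\varphi$ is well defined and is a morphism of vector groupoids.

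First I would check that $\varphi(u)$ actually lies in ${\cal W}$, i.e. that $\alpha_{V}(q(u))=\alpha_{V^{\prime}}(q^{\prime}(u))$ and $\beta_{V}(q(u))=\beta_{V^{\prime}}(q^{\prime}(u))$. Because $q$ and $q^{\prime}$ are morphisms of vector groupoids over the common base $V_{0}$, their restrictions to the unit space coincide, so by Proposition 2.4 (equivalently, the relations (2.1)) one has $\alpha_{V}\circ q=q_{0}\circ\alpha_{U}=\alpha_{V^{\prime}}\circ q^{\prime}$ as maps $U\to V_{0}$, and likewise $\beta_{V}\circ q=\beta_{V^{\prime}}\circ q^{\prime}$. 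Thus both defining equalities of ${\cal W}$ hold and $\varphi(u)\in{\cal W}$.

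Next I would verify the two conditions of Definition 3.2. Linearity is immediate and componentwise: for $a,b\in K$ and $u_{1},u_{2}\in U$, using that $q$ and $q^{\prime}$ are linear, $\varphi(au_{1}+bu_{2})=(q(au_{1}+bu_{2}),q^{\prime}(au_{1}+bu_{2}))=a\varphi(u_{1})+b\varphi(u_{2})$. For the groupoid-morphism part, take $(u_{1},u_{2})\in U_{(2)}$; since $q$ and $q^{\prime}$ are groupoid morphisms, $(q(u_{1}),q(u_{2}))\in V_{(2)}$ and $(q^{\prime}(u_{1}),q^{\prime}(u_{2}))\in V^{\prime}_{(2)}$, and combining these with the compatibility on units verifies the composability condition defining ${\cal W}_{(2)}$, so $(\varphi(u_{1}),\varphi(u_{2}))\in{\cal W}_{(2)}$. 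Multiplicativity then follows componentwise from the definition of $\odot_{\cal W}$ and the morphism property of $q,q^{\prime}$: $\varphi(u_{1}\odot_{U}u_{2})=(q(u_{1})\odot_{V}q(u_{2}),\,q^{\prime}(u_{1})\odot_{V^{\prime}}q^{\prime}(u_{2}))=\varphi(u_{1})\odot_{\cal W}\varphi(u_{2})$. Finally commutativity of the diagram is read off directly from $p(\varphi(u))=q(u)$ and $p^{\prime}(\varphi(u))=q^{\prime}(u)$.

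The main obstacle, and the only step that is more than bookkeeping, is the well-definedness in the second paragraph: unlike the direct product of Section 4.1, the Whitney sum is a proper subspace of $V\times V^{\prime}$ cut out by the two equalities on source and target, so one must confirm that the pair $(q(u),q^{\prime}(u))$ satisfies them. This is precisely the point at which the hypothesis that $q$ and $q^{\prime}$ are defined over the same base $V_{0}$ (and agree there) is indispensable; once it is in place, everything else is the routine componentwise transport of the vector-groupoid axioms already established for $V$ and $V^{\prime}$.
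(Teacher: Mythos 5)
Your proposal is correct and follows essentially the same route as the paper: define $\varphi(u):=(q(u),q^{\prime}(u))$, check it lands in $V\oplus V^{\prime}$ using that $q$ and $q^{\prime}$ are morphisms over the common base $V_{0}$, and verify linearity, composability and multiplicativity componentwise. Your only divergence is cosmetic but welcome: you make the uniqueness argument explicit (the commutativity constraints force the formula for $\varphi$), where the paper merely asserts it is proved ``in a standard manner.''
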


\begin{proof}
 We consider the map $ \varphi: U \to
V \oplus V^{\prime}$ by taking $\varphi(u):=( q(u),q^{\prime}(u))$
for all $u\in U. $ By hypothesis the maps $ q: U \to V $ and $
q^{\prime} : U \to V^{\prime} $ are vector groupoid morphisms.
Then $ (\alpha_{V}\circ q )(u) = \alpha_{U}(u) $ and $
(\alpha_{V^{\prime}}\circ q^{\prime} )(u) = \alpha_{U}(u) $, for
all $ u\in U.$ It follows that $ \alpha_{V}( q (u) ) =
\alpha_{V^{\prime}}(q^{\prime} (u))$.  Similarly $ \beta_{V}( q
(u) ) = \beta_{V^{\prime}}(q^{\prime} (u))$. Therefore $
\varphi(u) \in W:= V\oplus V^{\prime}.$ Hence $ \varphi $ is
well-defined.

Let now $ x, y\in U$ such that $ (x,y)\in U_{(2)}, $ i.e. $
\beta_{U}(y)= \alpha_{U}(x).$ Also we have $ ( q(x), q(y))\in
V_{(2)}$, i.e. $ \beta_{V}(q(y))= \alpha_{V}(q(x)),$ since $ q $
is a groupoid morphism. Then $ (\varphi (x), \varphi(y))\in
W_{(2)}$. Indeed,  $ \beta_{W}(\varphi(y))= \beta_{W}(q(y),
q^{\prime}(y))= ( \beta_{V}(q(y)),\beta_{V}(q(y)))= (
\alpha_{V}(q(x)),\alpha_{V}(q(x)) =
\alpha_{W}(q(x),q^{\prime}(x))= \alpha_{W}(\varphi(x))$.

 For $ x, y\in U$ such that $ (x,y)\in U_{(2)} $ we have $~
\varphi(x \odot_{U} y) =$\\
$=( q (x \odot_{U} y), q^{\prime} (x \odot_{U} y)) = (
q(x)\odot_{V} q(y),q^{\prime}(x)\odot_{V^{\prime}} q^{\prime}(y))
= \varphi(x)\odot_{W}\varphi(y).$

Using the linearity of $ q $ and $ q^{\prime}$ it is easy to
verify that $ \varphi $ is a linear map. Therefore, $ \varphi $ is
a vector groupoid morphism. We have $~p\circ \varphi = q~$ and
$~p^{\prime}\circ \varphi = q^{\prime}.$

In a standard manner we prove that $\varphi$ is a unique morphism
of vector groupoids such that the above diagram is commutative.
 \end{proof}

\vspace*{0.5cm}

DEPARTMENT OF MATHEMATICS, WEST UNIVERSITY OF TIMI\c SOARA, Bd. V.
P{\^A}RVAN,nr.4, 1900, TIMI\c SOARA, ROMANIA\\
\hspace*{0.7cm} E-mail:vpoputa@yahoo.com; ivan@math.uvt.ro

\end{document}